\theoremstyle{plain}
\newtheorem{theorem}{Theorem}
\newtheorem{corollary}{Corollary}
\newtheorem{lemma}{Lemma}
\newtheorem{proposition}{Proposition}
\theoremstyle{definition}
\newtheorem{conjecture}{Conjecture}
\newtheorem{example}{Example}
\newtheorem{remark}{Remark}
\newtheorem{question}{Question}
\numberwithin{equation}{section}
\numberwithin{theorem}{section}
\numberwithin{corollary}{section}
\numberwithin{conjecture}{section}
\numberwithin{lemma}{section}
\numberwithin{proposition}{section}
\numberwithin{remark}{section}
\numberwithin{example}{section}
\numberwithin{question}{section}
\def\ZZ{\mathbb{Z}}
\newcommand{\abs}[1]{\lvert{#1}\rvert}
\renewcommand{\a}{\alpha}
\title{Minimum Coprime Graph Labelings}
\author{Catherine Lee}
\address{Department of Mathematics, Yale University, New Haven, CT 06511}
\email{catherine.lee@yale.edu}
\begin{document}
\maketitle

\begin{abstract}
A coprime labeling of a graph $G$ is a labeling of the vertices of $G$ with distinct integers from $1$ to $k$ such that adjacent vertices have coprime labels. The minimum coprime number of $G$ is the least $k$ for which such a labeling exists. In this paper, we determine the minimum coprime number for a few well-studied classes of graphs, including the coronas of complete graphs with empty graphs and the joins of two paths. In particular, we resolve a conjecture of Seoud, El Sonbaty, and Mahran and two conjectures of Asplund and Fox. We also provide an asymptotic for the minimum coprime number of the Erd\H{o}s--R\'enyi random graph.
\end{abstract}

\section{Introduction} \label{intro}
\subsection{Background}
Let $G$ be a simple graph with $n = \abs{V(G)}$ vertices. A \emph{coprime labeling} of $G$ is an injection $f \colon V(G) \to \{1, 2, \dots, k\}$, for some integer $k \geq n$, such that if $(u,v) \in E(G)$ then $\gcd(f(u),f(v)) = 1$. The \emph{minimum coprime number} $\mathfrak{pr}(G)$ is the least $k$ for which such a labeling exists; a coprime labeling of $G$ using only integers up to $\mathfrak{pr}(G)$ is called a \emph{minimum coprime labeling} of $G$. If $\mathfrak{pr}(G) = n$, then a minimum coprime labeling of $G$ is called a \emph{prime labeling} and $G$ is a \emph{prime graph}.

The notion of prime labeling originated with Entringer and was introduced in a paper by Tout, Dabboucy, and Howalla \cite{tout}. It is conceptually related to the \emph{coprime graph of integers}, the graph with vertex set $\ZZ$ that contains the edge $(m,n)$ if and only if $\gcd(m,n)=1$. The induced subgraph $G(A)$ with vertex set $A \subset \{1, \dots, N\}$ is called the \emph{coprime graph of $A$} and was first studied by Erd\H{o}s \cite{erdos}, who posed the famous problem of finding the largest set $A \subset \{1, \dots, N\}$ such that $K_k \not\subset G(A)$. Newman's coprime mapping conjecture, which was proven by Pomerance and Selfridge \cite{pomerance}, involves the existence of perfect matchings in $G(A)$. Various other properties of the coprime graph of integers have been studied by Ahlswede and Khachatrian \cite{ahlswede1, ahlswede2, ahlswede3}, Erd\H{o}s \cite{erdos2}, Erd\H{o}s and A. S\'ark\"ozy \cite{erdos3}, Erd\H{o}s and G. N. S\'ark\"ozy \cite{erdos6}, Erd\H{o}s, A. S\'ark\"ozy, and Sz\'emeredi \cite{erdos4, erdos5}, and G. N. S\'ark\"ozy \cite{sarkozy}. Further discussion of the coprime graph of integers may be found in \cite{handbook}.

The problem of finding a minimum coprime labeling of a graph $H$ is equivalent to showing that $H$ is a subgraph of the coprime graph $G(\{1, 2, \dots, \mathfrak{pr}(H)\})$.
Much work has been done to prove that various classes of graphs are prime; we refer the reader to \cite{gallian} for a full catalog of results. In particular, it is known that all paths, cycles, helms, fans, flowers, books, and wheels of even order are prime \cite{gallian, seoud, seoud2}.

The primality of trees has been especially well studied. Entringer and Tout conjectured around 1980 that every tree is prime. While this conjecture remains open, it is now known that many classes of trees are prime, including paths, stars, caterpillars, spiders, and complete binary trees \cite{gallian}. Fu and Huang \cite{fu} proved in 1994 that trees with 15 or fewer vertices are prime, and Pikhurko \cite{pikhurko} extended this result to trees with 50 or fewer vertices in 2007. Salmasian \cite{salmasian} showed that any tree $T$ with $n \geq 50$ vertices satisfies $\mathfrak{pr}(T) \leq 4n$. Pikhurko \cite{pikhurko} improved upon this by showing that the Entringer--Tout conjecture holds asymptotically, i.e., for any $c>0$, there is an $N$ such that for any tree $T$ of order $n>N$, $\mathfrak{pr}(T) < (1+c)n$. In 2011, Haxell, Pikhurko, and Taraz \cite{haxell} proved the Entringer--Tout conjecture for trees of sufficiently large order.

In this paper, we focus on the minimum coprime numbers of a few well-studied classes of graphs. The first class we consider is formed by taking the coronas of complete graphs and empty graphs.
The \emph{corona} of a graph $G$ with a graph $H$, denoted $G \odot H$, is obtained by combining one copy of $G$ with $\abs{V(G)}$ copies of $H$ by attaching the $i^\text{th}$ vertex in $G$ to every vertex in the $i^\text{th}$ copy of $H$. Tout, Dabboucy, and Howalla \cite{tout} showed that the \emph{crown graphs} $C_n \odot \overline{K}_m$ are prime for all positive integers $n$ and $m$. The graphs $K_n \odot \overline{K}_m$, which are spanning supergraphs of the crowns $C_n \odot \overline{K}_m$, have also been studied in this context. Youssef and Elsakhawi \cite{youssef} showed that $K_n \odot K_1$ is prime for $n \leq 7$, and $K_n \odot \overline{K}_2$ is prime for $n \leq 16$.
Seoud, El Sonbaty, and Mahran \cite{seoud3} then observed that $K_n \odot \overline{K}_m$ is not prime if $n > \pi(n(m+1)+1)$.\footnote{Throughout this paper, we denote $\pi(x)$ to be the number of primes less than or equal to $x$, and $p_n$ to be the $n^\text{th}$ prime number.} They also conjectured the converse,
\begin{conjecture}[\cite{seoud3}, Conjecture 3.9] \label{conj1}
The graph $K_n \odot \overline{K}_m$ is prime if $n \leq \pi(n(m+1))+1$.
\end{conjecture}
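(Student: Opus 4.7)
The plan is to label the $n$ central vertices of $K_n$ with $1$ and the first $n-1$ primes, and then distribute the remaining labels to the pendants via Hall's matching theorem. Set $N := n(m+1)$; the hypothesis $n \leq \pi(N)+1$ guarantees at least $n-1$ primes in $\{1,\dots,N\}$. Label the central vertices $v_0, v_1, \dots, v_{n-1}$ with $1, p_1, p_2, \dots, p_{n-1}$ respectively. Since these are pairwise coprime, all edges of the central $K_n$ are satisfied.

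It remains to partition the leftover labels $L := \{1,\dots,N\} \setminus \{1, p_1, \dots, p_{n-1}\}$ (which has $|L| = nm$ elements) into $n$ blocks of size $m$ so that the block assigned to $v_i$ contains no multiple of $p_i$ for $i \geq 1$, while the block for $v_0$ is unrestricted. Consider the bipartite graph with parts $L$ and $\{v_0, \dots, v_{n-1}\}$, with $\ell \sim v_i$ iff $p_i \nmid \ell$ (take $p_0 = 1$), and with capacity $m$ on each $v_i$. By the $b$-matching version of Hall's theorem, a valid assignment exists iff $|N(T)| \geq m|T|$ in $L$ for every $T \subseteq \{v_0, \dots, v_{n-1}\}$.

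I would verify Hall's condition by cases. If $v_0 \in T$, then $N(T) = L$ and the condition is trivial. Otherwise $T \subseteq \{v_1, \dots, v_{n-1}\}$, and $L \setminus N(T) = \{\ell \in L : P_T \mid \ell\}$ where $P_T := \prod_{v_i \in T} p_i$. When $|T| = 1$, say $T = \{v_i\}$, we have $|L \setminus N(T)| = \lfloor N/p_i \rfloor - 1$ (subtracting $p_i$, which is central), and the bound $\lfloor N/p_i \rfloor - 1 \leq m(n-1)$ reduces in the tightest case $p_i = 2$ to $(n-2)(m-1) \geq 0$. When $|T| = t \geq 2$, the composite $P_T$ is not a central label, so every multiple of $P_T$ in $\{1,\dots,N\}$ lies in $L$, giving $|L \setminus N(T)| = \lfloor N/P_T \rfloor \leq \lfloor N/(p_1 \cdots p_t) \rfloor$, and we need this to be at most $m(n-t)$. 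The binding case is $t = 2$ with $p_1 p_2 = 6$, where the required inequality $n(m+1) \leq 6m(n-2)$ rearranges to $5mn \geq n + 12m$, holding for all $n \geq 3, m \geq 1$ (with equality at $n = 3, m = 1$, where the floor still works out). For $t \geq 3$, the primorial $p_1 \cdots p_t$ grows much faster than $m(n-t)$ decreases, and the inequality becomes comfortable.

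The main obstacle is the Hall-condition case analysis, and in particular confirming that the $t = 2$ inequality is the uniquely binding constraint and that the boundary case $n = 3, m = 1$ survives the passage from real inequality to floor. A monotonicity check in $t$ (or a direct primorial estimate) should dispatch $3 \leq t \leq n-1$ uniformly, at which point the proof is complete.
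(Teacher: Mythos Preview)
Your approach is correct and genuinely different from the paper's. Both proofs label the central $K_n$ with $1,p_1,\dots,p_{n-1}$; the divergence is in how the pendant labels are distributed. The paper runs a \emph{greedy} algorithm: for $i=1,\dots,n-1$ in order it grabs the first $m$ unused labels coprime to $p_i$, and then argues by contradiction that if this stalls there would be too many multiples of $p_i$ among the remaining labels, splitting into three cases according to whether $p_i>n$, or $p_i\le n$ with $i\ge 3$, or $i=2$. Because that case analysis only goes through cleanly for $n\ge 4$ and $m\ge 3$, the paper invokes prior results of Tout--Dabboucy--Howalla, Youssef--Elsakhawi, and Asplund--Fox to cover small parameters. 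Your Hall's theorem formulation is more structural: the greedy-stall argument is really a disguised check of a single deficiency set, whereas you verify the full Hall condition directly, and your $t=1$ and $t=2$ computations already handle all $n,m\ge 1$ without appealing to the literature for small cases.

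The one soft spot is exactly where you flagged it: the $t\ge 3$ verification is only sketched. It does go through. Since $m\ge 1$ gives $n(m+1)/m\le 2n$, it suffices to show $(n-t)\,p_1\cdots p_t\ge 2n$ for $3\le t\le n-1$. The left side is increasing in $n$ (its $n$-derivative is $p_1\cdots p_t-2>0$), so one only needs the boundary $n=t+1$, where the inequality becomes $p_1\cdots p_t\ge 2(t+1)$; this holds for $t=3$ ($30\ge 8$) and is trivially maintained for larger $t$ by primorial growth. With that paragraph added, your proof is complete and, unlike the paper's, self-contained for all $m,n$.
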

They computed all values of $n$ satisfying this condition for all $m \leq 20$.
Most recently, Asplund and Fox \cite{asplund} computed the minimum coprime numbers of $K_n \odot K_1$ and $K_n \odot \overline{K}_2$, showing that $\mathfrak{pr}(K_n \odot K_1) = p_{n-1}$ if $n>7$ and $\mathfrak{pr}(K_n \odot \overline{K}_2) = p_{n-1}$ if $n>16$. They conjecture that their results extend whenever $n$ is sufficiently large relative to $m$.
\begin{conjecture}[\cite{asplund}, Conjecture 1] \label{conj2}
For all positive integers $m$, there exists an $M>m$ such that for all $n>M$, $\mathfrak{pr}(K_n \odot \overline{K}_m) = p_{n-1}$.
\end{conjecture}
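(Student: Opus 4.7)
The plan is to establish matching bounds. The lower bound $\mathfrak{pr}(K_n \odot \overline{K}_m) \geq p_{n-1}$ holds for all $n$ and is essentially already in the preceding discussion: the $n$ centers of $K_n \odot \overline{K}_m$ induce a $K_n$ and so require pairwise coprime labels, but any set of $n$ pairwise coprime positive integers contains at most one multiple of each prime (and at most one copy of $1$), forcing its maximum to be at least $p_{n-1}$. The substance of the proof is the matching upper bound for $n$ large, which I would approach by constructing a coprime labeling using labels only from $\{1, 2, \dots, p_{n-1}\}$. Note that by the prime number theorem $p_{n-1} \sim n \ln n$, so for $n$ sufficiently large relative to $m$ the capacity condition $p_{n-1} \geq n(m+1)$ holds with room to spare.

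I would assign the labels $\{1, 2, 3, 5, 7, \dots, p_{n-1}\}$ — i.e., $1$ together with the first $n-1$ primes — to the $n$ centers; these are pairwise coprime, so the $K_n$-edges are handled. It then remains to assign the $nm$ pendants distinct labels drawn from the $p_{n-1} - n$ composites in $\{4, \dots, p_{n-1}\}$ such that each pendant's label is coprime to its center's label. For each center label $L$, let $A_L$ denote the set of composites in $\{4, \dots, p_{n-1}\}$ coprime to $L$. The required pendant labeling amounts to choosing pairwise disjoint $m$-subsets $B_L \subseteq A_L$, which by the generalized Hall theorem for systems of distinct representatives with multiplicities exists if and only if $\bigl|\bigcup_{L \in T} A_L\bigr| \geq m|T|$ for every subset $T$ of center labels.

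The heart of the argument is verifying this Hall condition. If $1 \in T$, then $A_1$ is all composites in $\{4, \dots, p_{n-1}\}$, so the union has size $p_{n-1} - n$, and in the worst case $|T| = n$ the requirement reduces to $p_{n-1} \geq n(m+1)$, which PNT guarantees for $n$ large. If $1 \notin T$, write $T$ as a set of $k$ distinct primes with product $Q$; a composite lies outside $\bigcup_{L \in T} A_L$ iff $Q$ divides it, so there are at most $\lfloor p_{n-1}/Q \rfloor$ such composites. The worst case at fixed $k$ is $T = \{2, 3, 5, \dots, p_k\}$ with $Q$ the $k$th primorial $Q_k = p_1 \cdots p_k$. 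Using the crude bound $Q_k \geq 2^k$, the Hall inequality reduces to $p_{n-1} \geq 2(mk+n)$, whose maximum over $k \in \{1, \dots, n-1\}$ is $2n(m+1) - 2m$, again supplied by PNT for $n$ above some threshold $M(m)$.

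The main obstacle will be handling the Hall verification uniformly across all subset sizes $k$: for small $k$, the primorial $Q_k$ is modest so the defect term $p_{n-1}/Q_k$ is non-negligible, while for large $k$ the defect is negligible but $mk$ becomes sizable. The super-exponential growth of the primorial, captured already by $Q_k \geq 2^k$, tames this tradeoff and compresses everything into a single linear-in-$n$ bound on $p_{n-1}$, which PNT comfortably supplies for $n$ above some $M(m)$.
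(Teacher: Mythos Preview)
Your proposal is correct and establishes the conjecture, but it takes a different route from the paper. Both arguments label the $K_n$ centers with $1, p_1, \dots, p_{n-1}$ and then show the $mn$ pendants can be filled in from the remaining integers up to $p_{n-1}$. The paper does this by a greedy procedure: process the center primes $p_1, p_2, \dots$ in order, at step $i$ assign the $m$ smallest still-available labels coprime to $p_i$, and verify by a short case split that at least $m$ such labels always remain (otherwise the survivors would contain too many multiples of $p_i$, exceeding $n(m+1)$). Your Hall-theorem formulation is structurally cleaner and sidesteps the casework, but the crude estimate $Q_k \geq 2^k$ costs a factor of two, forcing $p_{n-1} \gtrsim 2n(m+1)$ rather than $p_{n-1} \geq n(m+1)$. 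The paper's greedy analysis is tight enough to yield the exact formula $\mathfrak{pr}(K_n \odot \overline{K}_m) = \max(n(m+1), p_{n-1})$ for \emph{all} $m$ and $n$, which in particular pins down the optimal threshold $M$; your approach as written only delivers existence of some $M(m)$.
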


We prove the following statement, which resolves Conjectures \ref{conj1} and \ref{conj2} affirmatively.
\begin{theorem} \label{corona-thm}
For all positive integers $m$ and $n$,
\begin{equation*}
\mathfrak{pr}(K_n \odot \overline{K}_m) = \max(mn+n, p_{n-1}).
\end{equation*}
\end{theorem}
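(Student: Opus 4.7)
The plan is to match a lower bound against an explicit construction. For the lower bound, $\mathfrak{pr}(K_n \odot \overline{K}_m) \geq n(m+1)$ is forced by the vertex count, while $\mathfrak{pr}(K_n \odot \overline{K}_m) \geq p_{n-1}$ because the $n$ central vertices form $K_n$ and thus require $n$ pairwise coprime labels---any such collection of positive integers contains at most one copy of $1$, and the remaining $n-1$ integers contribute pairwise distinct prime factors, so the largest is at least $p_{n-1}$. Setting $N := \max(n(m+1), p_{n-1})$, it remains to exhibit a coprime labeling of $K_n \odot \overline{K}_m$ using labels in $\{1, \ldots, N\}$.

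For the construction, I assign the $n$ central vertices the labels $\{1, p_1, p_2, \ldots, p_{n-1}\}$ in any order; these are pairwise coprime and all $\leq p_{n-1} \leq N$. It remains to label the $nm$ leaves with distinct integers from the pool $L := \{1, \ldots, N\} \setminus \{1, p_1, \ldots, p_{n-1}\}$ so that each leaf is coprime to the label of its central neighbor, with exactly $m$ leaves attached to each center. This is a bipartite matching problem with capacity $m$ on the center side, and by Hall's theorem a valid assignment exists provided that, for every subset $S$ of the $n$ central labels, the set
\[
\calN(S) := \{k \in L : \gcd(k, c) = 1 \text{ for some } c \in S\}
\]
satisfies $|\calN(S)| \geq m|S|$.

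The Hall verification splits naturally. If $1 \in S$ then $\calN(S) = L$ has size $N - n \geq mn \geq m|S|$, which follows from $N \geq n(m+1)$. Otherwise $S \subseteq \{p_1, \ldots, p_{n-1}\}$ with $s := |S| \geq 1$, and the complement $L \setminus \calN(S)$ consists of the multiples of $Q_S := \prod_{p \in S} p$ in $L$ (numbering $\lfloor N / Q_S \rfloor$, less $1$ when $s = 1$ to remove the center label $p$ itself). Hall's condition then reduces to $\lfloor N / Q_S \rfloor \leq N - n - ms + \epsilon_s$, where $\epsilon_s = 1$ if $s = 1$ and $0$ otherwise. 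The tightest instance is $s = 1$, $S = \{2\}$, for which the inequality becomes $N \geq 2(n + m - 1)$; this follows from $N \geq n(m+1)$ and the algebraic identity $n(m+1) - 2(n+m-1) = (n-2)(m-1) \geq 0$ (valid for $n \geq 2$, $m \geq 1$), and in the regime $N = p_{n-1} > n(m+1)$ the same bound transfers. I expect this tight case $s=1$, $p=2$ to be the main obstacle, as it is the only subcase where the two lower-bound ingredients must be balanced carefully against one another; for $s \geq 2$ the primorial bound $Q_S \geq p_1 p_2 \cdots p_s$ gives ample slack, and the edge case $n = 1$ (a star) is verified directly.
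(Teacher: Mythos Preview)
Your approach is correct and takes a genuinely different route from the paper. Both proofs share the lower bound and the choice of central labels $\{1,p_1,\dots,p_{n-1}\}$; they diverge in how the $mn$ leaf labels are allocated. The paper runs a greedy scheme---at stage $i$ it grabs the $m$ smallest unused integers coprime to $p_i$---and then argues by contradiction that this never stalls, splitting into three cases on the relative sizes of $p_i$, $i$, and $n$, while outsourcing all instances with $n\le 3$ or $m\le 2$ to earlier papers. Your Hall-theorem reduction is cleaner and self-contained: once the deficiency form of Hall is checked, every $(m,n)$ is handled uniformly, and the argument makes transparent that the only genuine bottleneck is $S=\{2\}$, where the inequality $N\ge 2(n+m-1)$ is exactly the identity $(n-2)(m-1)\ge 0$ applied to $N\ge n(m+1)$. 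What your write-up still owes the reader is the promised ``ample slack'' for $s\ge 2$: the needed inequality is $\lfloor N/Q_S\rfloor \le m(n-s)$, and since $N(1-1/Q_S)\ge n(m+1)(1-1/P_s)$ it suffices to check $P_s \ge \dfrac{n(m+1)}{m(n-s)}$, whose worst case over $m\ge 1$ and $n\ge s+1$ is $2(s+1)$; the primorial bound $P_s\ge 2(s+1)$ holds for all $s\ge 2$ (equality at $s=2$). With that line inserted, your proof is shorter than the paper's and avoids its appeals to prior literature for small parameters.
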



The next class of graphs we consider is constructed via the join operation. The \emph{join} of two disjoint graphs $G$ and $H$, denoted $G+H$, consists of the graph union $G \cup H$ with edges added to connect each vertex in $G$ to each vertex in $H$. We focus on the joins of two paths, which are well-studied. It was shown in \cite{seoud, seoud3} that $P_n + K_1 = P_n + P_1$ is prime, $P_n + \overline{K}_2$ is prime if and only if $n \geq 3$ is odd, and $P_n + \overline{K}_m$ is not prime for $m \geq 3$. Asplund and Fox \cite{asplund} computed the minimum coprime numbers of $P_m + P_n$ for various $m$ and $n$. They showed the following theorem.

\begin{theorem}[\cite{asplund}, Theorems 17, 18, and 19] \label{tight-join-asplund}
For $m \geq 4$ even and $n=2$, or $m \geq n$ and $n=3$ or $4$, the minimum coprime number of $P_m+P_n$ is given by
\begin{equation} \label{path-eq}
\mathfrak{pr}(P_m+P_n) = \begin{cases} m+2n-2 & \text{if } m \text{ is odd}\\ m+2n-1 & \text{if } m \text{ is even}.\end{cases}
\end{equation}
\end{theorem}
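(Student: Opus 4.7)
The plan is to prove matching lower and upper bounds for $\mathfrak{pr}(P_m + P_n)$ in the stated cases.

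\textbf{Lower bound via pigeonhole on parity.} Since every vertex of $P_m$ is adjacent to every vertex of $P_n$ in the join, for any prime $p$, all labels divisible by $p$ must lie on the same side of the join: otherwise two multiples of $p$ on opposite sides would be adjacent yet share the factor $p$. Applying this to $p=2$, all even labels used must lie on a single side, where they form an independent set of the corresponding path. Since $P_r$ has maximum independent set of size $\lceil r/2 \rceil$, the number of even labels used is at most $\max(\lceil m/2 \rceil, \lceil n/2 \rceil) = \lceil m/2 \rceil$ (using $m \geq n$). On the other hand, of the $\lfloor k/2 \rfloor$ even labels in $\{1,\ldots,k\}$, at most $k-(m+n)$ can be unused, so at least $\lfloor k/2 \rfloor - (k-m-n)$ are used. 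Combining gives
\[
\lceil k/2 \rceil \geq \lfloor m/2 \rfloor + n,
\]
which yields $k \geq m+2n-2$ when $m$ is odd and $k \geq m+2n-1$ when $m$ is even, matching the target formula.

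\textbf{Upper bound via explicit construction.} For each of the small cases $n \in \{2,3,4\}$, I would construct a labeling attaining the bound. The recipe is to place all even labels on the $P_m$-side, interleaved with odd labels in the near-consecutive order $2,3,4,5,\ldots$ so that the path edges of $P_m$ are handled automatically (consecutive integers are coprime), and to reserve for $P_n$ a short sequence of odd labels that are mutually consecutive-coprime along $P_n$ and jointly coprime to every $P_m$-label. By Bertrand's postulate there is a prime $p \in (k/2,k]$, and such a $p$ is coprime to every other integer in $\{1,\ldots,k\}$; placing $p$ on $P_n$ together with the label $1$ handles the pair required for $n=2$. For $n=3,4$ I would fit in further primes from $(k/3,k/2]$ or carefully chosen odd composites whose prime factors can be excluded from the $P_m$-side, at the cost of skipping a few more labels from $\{1,\ldots,k\}$ — precisely the $n-2$ or $n-1$ skips that the formula permits.

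\textbf{Main obstacle.} The delicate step is verifying the construction uniformly: the chosen $P_n$-labels must be coprime to all $P_m$-labels, and after removing the skipped and $P_n$-reserved labels, the remaining labels on $P_m$ must still admit a consecutive-coprime ordering. This forces a case analysis based on the factorizations of the integers near $k$. For instance, when $m+3$ is a prime power such as $9=3^2$ (which happens for $m=6$), placing $m+3$ on $P_n$ would forbid every multiple of $3$ from $P_m$, which conflicts with the natural consecutive-integer arrangement; the labels to skip and the arrangement on $P_m$ must then be chosen more carefully, or a different prime must be used on $P_n$. Handling the parity of $m$ together with the small exceptional values where $m+1, m+2, m+3$ have awkward factorizations yields several sub-constructions. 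The structure above, however, reduces the theorem to a finite verification of these cases, with the bulk of the work on the upper bound.
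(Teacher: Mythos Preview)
The paper does not prove this theorem at all: it is quoted from Asplund and Fox \cite{asplund} and used as a black box to cover the base cases $n\le 4$ inside the proof of Theorem~\ref{pathjoin}. So there is no ``paper's own proof'' to compare against here.

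That said, your sketch is broadly sound and in fact parallels what the paper does for the generalization. Your lower bound is exactly Proposition~\ref{independence} specialized to $P_m+P_n$ (with $\alpha(P_m+P_n)=\lceil m/2\rceil$), and your upper-bound strategy---label $P_n$ with $1$ together with primes from the top half of $\{1,\dots,L\}$, then thread the remaining integers along $P_m$ in near-consecutive order---is precisely the template the paper uses to prove Theorem~\ref{pathjoin} for $n\ge 5$, where Ramanujan primes guarantee enough primes in $(L/2,L]$.

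The only caveat is that what you have written is a plan, not a proof. For $n=3,4$ with small $m$ there need not be two (respectively three) primes in $(L/2,L]$, so the ``further primes from $(k/3,k/2]$ or carefully chosen odd composites'' clause is doing real work that you have not carried out; your own example of $m=6$, $n=3$ shows the kind of ad hoc repair required. Since the theorem only concerns $n\in\{2,3,4\}$, this indeed reduces to a finite case check plus a uniform construction for $m$ large enough that Bertrand-type inputs kick in, which is exactly how Asplund and Fox handle it; but the verification still has to be done, and your proposal stops short of doing it.
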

They proceeded to show that \eqref{path-eq} holds for $2 \leq n \leq 10$ if $m>118$.
They conjecture that this result extends to all $n$ as long as $m$ is sufficiently large.

\begin{conjecture}[\cite{asplund}, Conjecture 2] \label{conj3}
For any positive integer $N$, there exists a positive integer $M$ such that for all $m>M$ and $2 \leq n \leq N$, the minimum coprime number of $P_m+P_n$ is given by
\begin{equation*}
\mathfrak{pr}(P_m+P_n) = \begin{cases} m+2n-2 & \text{if } m \text{ is odd} \\ m+2n-1 & \text{if } m \text{ is even}.\end{cases}
\end{equation*}
\end{conjecture}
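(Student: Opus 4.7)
The plan is to establish matching lower and upper bounds on $\mathfrak{pr}(P_m+P_n)$ for $m$ sufficiently large relative to $N$.

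\textbf{Lower bound.} This is the easier direction. For $m$ large, every coprime labeling of $P_m + P_n$ with labels in $\{1,\ldots,k\}$ must assign only odd labels to the $P_n$ side, because an even label on $P_n$ would force every label on $P_m$ to be odd, contradicting $m \leq \lceil k/2\rceil$ when $k$ is of order $m$. Hence all $\lfloor k/2\rfloor$ evens in $\{1,\ldots,k\}$ either land on $P_m$ or go unused. Since the even labels of $P_m$ form an independent set in the path, $P_m$ holds at most $\lceil m/2\rceil$ evens, while the number of unused labels is exactly $k - m - n$. The inequality $\lfloor k/2\rfloor \le \lceil m/2\rceil + (k-m-n)$ then simplifies to $k \ge m + 2n - 1$ for even $m$ and $k \ge m + 2n - 2$ for odd $m$.

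\textbf{Upper bound construction.} Set $k$ to the conjectured value. By the prime number theorem in arithmetic progressions, for $m$ large enough depending only on $N$, the interval $(k/2,k)$ contains at least $N$ primes in each residue class modulo $3$ that is coprime to $3$, and these primes can be chosen pairwise separated by at least $4$. Select primes $p_1<\cdots<p_n$ from this pool with $p_i\equiv 2\pmod 3$ for $i\ge 2$, and label the $P_n$ side with them (in any order; they are pairwise coprime). Since each $p_i > k/2$, no multiple of $p_i$ other than $p_i$ itself lies in $\{1,\ldots,k\}$, so every label outside $L_n := \{p_1,\ldots,p_n\}$ is automatically coprime to all of $L_n$. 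Take $L_m := \{1,\ldots,k\} \setminus L_n \setminus E$, where $E := \{p_2+1,\ldots,p_n+1\}$ is a set of $n-1$ evens (for even $m$; see below for odd $m$). Arrange $L_m$ on $P_m$ in sorted order with one modification: move the label $1$ from position $1$ to sit between $p_1-1$ and $p_1+1$, shifting labels $2,3,\ldots,p_1-1$ one position earlier. The adjacencies to verify are of three kinds: consecutive integers (automatically coprime), the pairs $(p_1-1,1)$ and $(1,p_1+1)$ (coprime since $1$ is coprime to everything), and the pairs $(p_i-1, p_i+2)$ for $i\ge 2$, whose gcd divides $(p_i+2)-(p_i-1)=3$ and is forced to $1$ by the condition $p_i\equiv 2\pmod 3$.

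\textbf{Main obstacle.} The principal technical difficulty lies in the odd-$m$ case, where the budget of skipped evens drops from $n-1$ to $n-2$, so exactly one further prime (say $p_2$) must be handled without removing $p_2+1$ from the sorted sequence. I would resolve this by a local rearrangement of labels in a bounded window around $p_2$ that breaks the offending even-even adjacency by swapping in a nearby odd label, with the price of imposing extra residue conditions on $p_2$ (modulo $3$ and $5$, for instance) to keep all new adjacencies coprime. Only $O(N)$ such residue constraints arise in total, and all have bounded modulus, so the prime number theorem in arithmetic progressions guarantees that for $m$ sufficiently large the required primes exist in $(k/2,k)$. The remaining work is bookkeeping: verifying that the shifted label $1$, the skipped evens $\{p_i+1\}$, and the single local swap interact correctly at the boundary cases (near the start of the path, near $k$, and between consecutive chosen primes), which reduces to a finite case-check once the residue conditions on $p_1,\ldots,p_n$ are fixed.
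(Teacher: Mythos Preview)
Your overall strategy---put large primes (from the top half of $\{1,\dots,k\}$) on $P_n$ and use the nearly-sorted remaining labels on $P_m$, patching the gaps left by the removed primes---is the same as the paper's. Your even-$m$ construction is correct and in one respect cleaner: requiring $p_i\equiv 2\pmod 3$ so that $\gcd(p_i-1,p_i+2)=1$ avoids the paper's case split on whether consecutive $q_i$ are twin primes. (The paper instead labels $P_n$ with $1$ together with $n-1$ primes and quantifies ``sufficiently large'' via Ramanujan primes rather than the prime number theorem in progressions, but these are cosmetic differences.)

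The genuine gap is the odd-$m$ case, and your proposed ``local rearrangement near $p_2$'' cannot work as stated. When $m$ is odd and $k=m+2n-2$, the label set $L_m$ contains exactly $(m+1)/2$ evens and $(m-1)/2$ odds. Since adjacent evens are never coprime, the evens must occupy an independent set of size $(m+1)/2$ in $P_m$; but the \emph{only} such set is $\{1,3,\dots,m\}$, so the labeling is forced to alternate $E,O,E,\dots,O,E$ globally. Now any bounded window around the unpatched gap at $p_2$ contains one more even than odd label (you removed the odd $p_2$ but no even), whereas the alternation allots equal numbers of each to any interior window. Hence the surplus even cannot be absorbed by rearranging within the window: the fix must be global, not local, and no residue condition on $p_2$ alone changes this count.

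The paper resolves this by a global trick: it uses $1$ on $P_n$, so that one of the evens deleted during the patching (specifically some $x=q_\ell\pm 1$ with $11\nmid x$, guaranteed by a Brun--Titchmarsh argument modulo $11$) can be recycled. It then hand-builds the prefix
\[
x,\,11,\,12,\,5,\,4,\,3,\,8,\,7,\,6,\,13,\,10,\,9,\,14
\]
(which alternates $E,O,\dots,E$ and has all adjacent pairs coprime because $\gcd(x,11)=1$), follows it with the sorted tail from $15$ upward, and finishes with $2$. This is exactly the missing piece in your sketch; without something of this sort the odd-$m$ case is not complete.
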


The $k^\text{th}$ \textit{Ramanujan prime} $R_k$ is the least integer for which $\pi(x) - \pi(\frac{x}{2}) \geq k$ holds for all $x \geq R_k$. Ramanujan primes were introduced in \cite{ramanujan} as a generalization of Bertrand's Postulate; they are published as sequence A104272 in the OEIS \cite{oeis}.
We resolve Conjecture \ref{conj3} affirmatively, showing the following.
\begin{theorem} \label{pathjoin}
For any positive integer $N$, if $M \geq R_{N-1}-2N + 1$, then for all $m \geq M$ and $2 \leq n \leq N$, the minimum coprime number of $P_m+P_n$ is
\begin{equation*}
\mathfrak{pr}(P_m+P_n) = \begin{cases} m+2n-2 & \text{if } m \text{ is odd} \\ m+2n-1 & \text{if } m \text{ is even}.\end{cases}
\end{equation*}
\end{theorem}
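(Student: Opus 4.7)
The plan is to prove matching lower and upper bounds $\mathfrak{pr}(P_m + P_n) = m + 2n - 2$ (odd $m$) or $m + 2n - 1$ (even $m$), extending the Asplund--Fox strategy from small $n$ to all $n \leq N$. For the lower bound I would run a parity analysis on any valid labeling with maximum label $k$: if some $P_n$-vertex is even-labeled then $P_m$ is forced to be entirely odd-labeled (since every $P_m$-vertex is adjacent to that $P_n$-vertex), which contradicts $m$ large via $m \leq \lceil k/2 \rceil$. Hence all $P_n$-labels are odd, so each of the $\lfloor k/2 \rfloor$ even labels either appears on $P_m$ (non-adjacently, for at most $\lceil m/2 \rceil$ slots) or is unused. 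Bounding the unused evens against the total unused budget $k - m - n$ gives $\lceil k/2 \rceil \geq n + \lfloor m/2 \rfloor$, which rearranges to the claimed lower bound.

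For the upper bound I would label $P_n$ with $1$ together with $n - 1$ primes $q_1 < \cdots < q_{n-1}$ drawn from the interval $(k/2, k]$, arranged in any order; each such $q_i$ has no other multiple in $\{1, \ldots, k\}$, making it coprime to every label not on $P_n$, so every edge within $P_n$ and between $P_n$ and $P_m$ is automatic. The existence of $n - 1$ primes in $(k/2, k]$ is the statement $\pi(k) - \pi(k/2) \geq n - 1$, which by the defining property of the Ramanujan prime holds once $k \geq R_{n-1}$. I would verify this bound as follows: since $R_{N-1}$ is odd for $N \geq 3$, the threshold $R_{N-1} - 2N + 1$ is even, so an odd $m \geq M$ in fact satisfies $m \geq R_{N-1} - 2N + 2$; combined with the elementary gap estimate $R_j - R_{j-1} \geq 2$ summed over $j = n, \ldots, N - 1$, this yields $k \geq R_{N-1} - 2(N - n) \geq R_{n-1}$ in both parities. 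The degenerate case $N = 2$ is covered by Theorem~\ref{tight-join-asplund}.

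It remains to arrange $m$ labels drawn from $\{2, \ldots, k\} \setminus \{q_1, \ldots, q_{n-1}\}$ coprimely along $P_m$. The natural baseline is the consecutive sequence $2, 3, \ldots, m + 1$, whose consecutive pairs are trivially coprime; but since $q_i > k/2$ can be as small as roughly $(m + 2n - 2)/2$, some $q_i$ may fall inside $[2, m+1]$ and must be swapped for a replacement from the pool $\{m + 2, \ldots, k\} \setminus \{q_j\}$, which has exactly $n - 2$ (odd $m$) or $n - 1$ (even $m$) more elements than the number of swaps. The main obstacle is producing these buffer replacements so that the path remains coprime at every swap point: since $q_i - 1$ and $q_i + 1$ are both even, the buffer must be odd and coprime to the odd parts of both neighbors, and a simple single-position swap can already fail (for example with $n = 2$, $m = 13$, $q_1 = 13$), forcing a local rearrangement of the surrounding segment. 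I expect this combinatorial construction---systematically assigning buffers and possibly shuffling the local ordering at each swap---to constitute the bulk of the proof, while everything before (the parity lower bound and the Ramanujan prime accounting) is essentially bookkeeping.
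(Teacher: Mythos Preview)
Your high-level strategy matches the paper's exactly: the lower bound is the even-labels-form-an-independent-set argument (the paper states it as Proposition~\ref{independence} with $\alpha(P_m+P_n)=\lceil m/2\rceil$, which collapses your Case~1/Case~2 split into one line), and the upper bound labels $P_n$ with $1$ together with $n-1$ primes in $(L/2,L]$, reducing the problem to arranging the leftover integers along $P_m$. Your verification that $L\geq R_{n-1}$ via $R_j-R_{j-1}\geq 2$ is actually more explicit than what the paper writes.

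The genuine gap is the $P_m$ construction, which you correctly flag as the bulk of the proof but do not carry out, and the paper's resolution involves an idea your outline does not anticipate. Rather than your swap-based framing, the paper starts from the full ordered list $S_1=\{2,\dots,L\}\setminus\{q_1,\dots,q_{n-1}\}$ and, at each gap left by $q_i$, deletes \emph{one} of the adjacent integers $q_i\pm 1$ (or $q_i-3$ in the twin-prime case) according to a short case analysis on residues mod~$3$ and mod~$5$, yielding a consecutive-coprime list $S_n$. The subtlety is that for odd $m$ this leaves $|S_n|=m-1$, one label short. In your framing the same obstruction appears as having exactly $s$ odd non-prime replacements in $[m+2,L]$ for $s$ required swaps, with zero slack. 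The paper recovers the missing label by first proving (Lemma~\ref{eleven}, via Brun--Titchmarsh) that one may choose some $q_\ell\not\equiv\pm 1\pmod{11}$, so that one of the deleted integers $x=q_\ell\pm 1$ satisfies $11\nmid x$; this $x$ is then prepended to a hand-built length-$13$ prefix $x,11,12,5,4,3,8,7,6,13,10,9,14,\dots$ that absorbs the irregularity. Without a device of this kind, the ``local rearrangement'' you allude to will not close.
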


Our results on $P_m+P_n$ automatically yield upper bounds on the minimum coprime numbers of the complete bipartite graphs $K_{m,n}$, and our constructions also generalize to the join of two cycles and the join of a path and a cycle, enabling us to compute $\mathfrak{pr}(C_m+C_n)$, $\mathfrak{pr}(C_m+P_n)$, and $\mathfrak{pr}(P_m+C_n)$ for sufficiently large $m$. Our results on these classes of graphs are given in Theorems \ref{complete-bipartite} and \ref{cycle-cycle}, and Corollaries \ref{cycle-path} and \ref{path-cycle}.

We conclude this paper by providing an asymptotic for the minimum coprime number of a random subgraph, a topic which, to the best of our knowledge, has not been previously studied. Given a graph $G$ and $p \in (0,1)$, we obtain a probability distribution $G_p$ called a \emph{random subgraph} by taking subgraphs of $G$ with each edge appearing independently with probability $p$. When $G = K_n$, this is called the \emph{Erd\H{o}s--R\'enyi random graph}, denoted $G(n,p)$. We compute the following asymptotics for the minimum coprime number of the Erd\H{o}s--R\'enyi random graph.
\begin{theorem} \label{random-er}
We have
\begin{equation*}
\mathfrak{pr}(G(n,p)) \sim n \log n
\end{equation*}
almost surely, i.e., with probability tending to $1$ as $n \to \infty$.
\end{theorem}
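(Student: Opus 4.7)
The plan is to sandwich $\mathfrak{pr}(G(n,p))$ between matching upper and lower bounds, each asymptotic to $n\log n$. Both will rely on the observation that distinct primes (and the integer $1$) are pairwise coprime, so primes are ``cheap'' to use as labels, while any multiple of a small prime is ``expensive'' since it is incompatible with every other multiple of that prime.

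For the upper bound I would use a deterministic construction: for \emph{any} graph on $n$ vertices, assigning the $n$ pairwise coprime integers $\{1,p_1,\ldots,p_{n-1}\}$ to the vertices in any order yields a valid coprime labeling. Hence $\mathfrak{pr}(G)\le p_{n-1}\sim n\log n$ by the prime number theorem, and in particular $\mathfrak{pr}(G(n,p))\le(1+o(1))\,n\log n$ surely. Note this is already independent of $p$ and of the realization of the random graph.

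For the lower bound I would combine two ingredients. The first is the standard fact that for any fixed $p\in(0,1)$ the independence number satisfies $\alpha(G(n,p))\le C_p\log n$ with probability tending to $1$, for a constant $C_p$ depending only on $p$. The second is a counting argument: given any coprime labeling $f\colon V(G)\to\{1,\ldots,k\}$, for each prime $q$ the set $\{v:q\mid f(v)\}$ pairwise shares the common factor $q$ and is therefore independent in $G$, so it has size at most $\alpha(G)$. Since every integer in $\{2,\ldots,k\}$ is either a prime $\le k$ or a composite with some prime divisor $\le\sqrt{k}$, a union bound over the small-prime divisors yields
\begin{equation*}
n \le 1 + \pi(k) + \alpha(G)\,\pi(\sqrt{k}).
\end{equation*}
Conditional on $\alpha(G)\le C_p\log n$ and using $k=O(n\log n)$ from the upper bound, the last term is $O(\sqrt{n\log n})=o(n)$. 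This forces $\pi(k)\ge n-o(n)$, and inverting via the prime number theorem gives $k\ge(1-o(1))\,n\log n$.

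Combining the two bounds gives $\mathfrak{pr}(G(n,p))\sim n\log n$ with probability tending to $1$, which is precisely the claim. The only nontrivial input is the logarithmic bound on $\alpha(G(n,p))$, which is classical; the remainder is elementary counting, and I do not anticipate a real obstacle beyond choosing the threshold $\sqrt{k}$ to cleanly separate small and large prime factors in the label set.
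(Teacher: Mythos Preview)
Your proposal is correct and follows essentially the same strategy as the paper: the upper bound comes from the trivial labeling by $\{1,p_1,\dots,p_{n-1}\}$, and the lower bound combines the classical $\alpha(G(n,p))=O(\log n)$ with the observation that for each prime $q$ the set of vertices whose label is a multiple of $q$ is independent. The only cosmetic difference is the choice of ``small prime'' threshold---you split at $\sqrt{k}$ (so that every composite label automatically has a small prime factor), whereas the paper splits at $p_{\sqrt n}$ and uses the extra fact $p_{\sqrt n}^{\,2}>p_n$ to reach the same conclusion; both yield $\pi(k)\ge n-o(n)$ and hence $k\ge(1-o(1))n\log n$.
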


As a corollary, we observe that
\begin{corollary}
For all $p$, $G(n,p)$ is almost surely not prime.
\end{corollary}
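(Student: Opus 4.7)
The plan is to deduce the corollary immediately from Theorem \ref{random-er}. By definition, a graph $G$ on $n$ vertices is prime if and only if $\mathfrak{pr}(G) = n$, since $n$ is the trivial lower bound for any injection $f\colon V(G) \to \{1,\dots,k\}$. So to show that $G(n,p)$ is almost surely not prime, it suffices to show that $\mathfrak{pr}(G(n,p)) > n$ with probability tending to $1$ as $n \to \infty$.

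This is where Theorem \ref{random-er} does all the work. That theorem asserts $\mathfrak{pr}(G(n,p)) \sim n \log n$ almost surely, meaning that for any $\varepsilon \in (0,1)$,
\[
P\bigl(\mathfrak{pr}(G(n,p)) \geq (1-\varepsilon)\, n \log n\bigr) \to 1
\]
as $n \to \infty$. Since $\log n \to \infty$, for all sufficiently large $n$ we have $(1-\varepsilon)\log n > 1$, and hence the event $\{\mathfrak{pr}(G(n,p)) \geq (1-\varepsilon) n \log n\}$ is contained in the event $\{\mathfrak{pr}(G(n,p)) > n\}$. Taking probabilities gives $P(\mathfrak{pr}(G(n,p)) > n) \to 1$, i.e., $G(n,p)$ is almost surely not prime.

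There is no real obstacle here beyond invoking Theorem \ref{random-er}; the corollary is a one-line consequence of the fact that $n \log n$ grows strictly faster than the trivial lower bound $n$. The only bookkeeping item is being precise about the mode of convergence, but since the theorem already provides convergence in probability (``with probability tending to $1$''), this transfers directly to the weaker statement $P(\mathfrak{pr}(G(n,p)) = n) \to 0$ that the corollary requires.
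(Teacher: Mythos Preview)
Your proof is correct and is exactly the intended argument: the paper presents this corollary as an immediate observation following Theorem \ref{random-er}, without even writing out a separate proof, since $n\log n$ outpaces $n$.
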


\subsection{Outline}
In Section \ref{prelim}, we state several straightforward results on the minimum coprime number of an arbitrary graph which we will later use in our proofs. In Section \ref{corona}, we discuss the graphs $K_n \odot \overline{K}_m$, proving Theorem \ref{corona-thm}. In Section \ref{join}, we focus on the graphs $P_m+P_n$, proving Theorem \ref{pathjoin}. We also consider the complete bipartite graphs $K_{m,n}$ and the graphs $C_m + C_n$ and $C_m+P_n$.
In Section \ref{random}, we discuss the minimum coprime number of the Erd\H{o}s--R\'enyi random graph, proving Theorem \ref{random-er}. We conclude by posing a number of open questions in coprime graph labeling in Section \ref{open}.

\section{Preliminaries} \label{prelim}
In this section we state several propositions relating the minimum coprime number to other graph properties. Recall that if $G$ and $H$ are graphs with $V(G) = V(H)$ and $E(G) \subseteq E(H)$, we call $H$ a \emph{spanning supergraph} of $G$ and $G$ a \emph{spanning subgraph} of $H$. The \emph{independence number} $\a(G)$ of $G$ is the size of the largest set of vertices $S$ in $V(G)$ such that no two vertices in $S$ are adjacent to one another. The \emph{chromatic number} $\chi(G)$ of $G$ is the least integer $k$ for which there exists a map $f: V(G) \to \{1, \dots, k\}$ such that if $(u,v) \in E(G)$ then $f(u) \neq f(v)$.

\begin{proposition} \label{subgraph}
Let $G$ and $H$ be graphs such that $H$ is a spanning supergraph of $G$. Then $\mathfrak{pr}(G) \leq \mathfrak{pr}(H)$.
\end{proposition}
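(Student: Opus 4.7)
The plan is to observe that this is an immediate consequence of the definitions: any coprime labeling of $H$ is automatically a coprime labeling of $G$, since the constraint for $G$ is a subset of the constraint for $H$.

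More precisely, I would start by letting $f \colon V(H) \to \{1, 2, \dots, \mathfrak{pr}(H)\}$ be a minimum coprime labeling of $H$. Since $V(G) = V(H)$, we may view $f$ as an injection $V(G) \to \{1, 2, \dots, \mathfrak{pr}(H)\}$. To verify that $f$ is a coprime labeling of $G$, I would take any edge $(u,v) \in E(G)$; by the hypothesis $E(G) \subseteq E(H)$, this edge also lies in $E(H)$, so $\gcd(f(u), f(v)) = 1$ by the coprime property of $f$ on $H$. Thus $f$ is a coprime labeling of $G$ using labels from $\{1, \dots, \mathfrak{pr}(H)\}$, and by the minimality definition of $\mathfrak{pr}(G)$, we conclude $\mathfrak{pr}(G) \leq \mathfrak{pr}(H)$.

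There is no real obstacle here; the statement is a direct unpacking of the definition of coprime labeling together with the subset relation on edge sets. The only thing worth being careful about is ensuring the labeling remains an injection (automatic, since we restrict the domain trivially) and that the label range $\{1, \dots, \mathfrak{pr}(H)\}$ is at least as large as $|V(G)| = |V(H)|$ (automatic from $\mathfrak{pr}(H) \geq |V(H)|$).
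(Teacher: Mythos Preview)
Your proof is correct and follows essentially the same approach as the paper: take a minimum coprime labeling of $H$, observe that it serves as a coprime labeling of $G$ since $V(G)=V(H)$ and $E(G)\subseteq E(H)$, and conclude. Your version is simply a slightly more detailed unpacking of what the paper states in one line.
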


\begin{proof}
Let $f: V(H) \to \{1, \dots, \mathfrak{pr}(H)\}$ be a minimum coprime labeling of $H$. As $V(G) = V(H)$, $f$ induces a coprime labeling of $G$. Hence $\mathfrak{pr}(G) \leq \mathfrak{pr}(H)$.
\end{proof}

The following corollary of Proposition \ref{subgraph} has been noted in numerous places in the literature.

\begin{corollary}
Any spanning subgraph of a prime graph is prime. Any spanning supergraph of a nonprime graph is nonprime.
\end{corollary}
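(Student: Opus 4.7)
The plan is to observe that both assertions follow immediately from Proposition \ref{subgraph} together with the definition of a prime graph, so the main content of the proof is simply unpacking these definitions carefully. Recall that a graph $G$ on $n$ vertices is prime precisely when $\mathfrak{pr}(G) = n$, and that by the very definition of $\mathfrak{pr}$, one always has $\mathfrak{pr}(G) \geq n$ since a coprime labeling must use at least $n$ distinct integers.

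For the first statement, I would start by letting $G$ be a spanning subgraph of a prime graph $H$. Since $V(G) = V(H)$, both graphs have the same number of vertices $n$. Applying Proposition \ref{subgraph} gives $\mathfrak{pr}(G) \leq \mathfrak{pr}(H) = n$, and combining this with the trivial lower bound $\mathfrak{pr}(G) \geq n$ yields $\mathfrak{pr}(G) = n$, so $G$ is prime.

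For the second statement, I would argue by essentially the contrapositive of the first. Let $H$ be a spanning supergraph of a nonprime graph $G$, with $|V(G)| = |V(H)| = n$. Nonprimeness of $G$ means $\mathfrak{pr}(G) > n$, and Proposition \ref{subgraph} then gives $\mathfrak{pr}(H) \geq \mathfrak{pr}(G) > n$, so $H$ is nonprime as well. There is no real obstacle here; the only thing to be a bit careful about is to invoke the equality $V(G) = V(H)$ explicitly so that the common vertex count $n$ is unambiguous in both halves of the argument.
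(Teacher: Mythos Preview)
Your proof is correct and follows exactly the approach the paper intends: the paper does not spell out a proof but simply presents this as an immediate corollary of Proposition~\ref{subgraph}, which is precisely what you do by combining that inequality with the definition of primality and the trivial bound $\mathfrak{pr}(G) \geq |V(G)|$.
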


\begin{proposition} \label{independence}
For any graph $G$, $\mathfrak{pr}(G) \geq 2(\abs{V(G)} - \a(G)) - 1$.
\end{proposition}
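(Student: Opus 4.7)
The plan is to exploit the simple but powerful observation that in any coprime labeling, the set of vertices receiving even labels must form an independent set. Indeed, any two even integers share the common factor $2$, so their gcd is at least $2$, and therefore no two vertices with even labels can be adjacent.

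Let $n = \abs{V(G)}$ and write $k = \mathfrak{pr}(G)$, so that $G$ admits a coprime labeling $f \colon V(G) \to \{1, 2, \ldots, k\}$. By the observation above, the preimage under $f$ of the even integers in $\{1, \ldots, k\}$ is an independent set in $G$, hence has size at most $\a(G)$. Consequently, at least $n - \a(G)$ vertices must receive odd labels, so the number of odd integers available in $\{1, \ldots, k\}$, namely $\lceil k/2 \rceil$, must satisfy
\begin{equation*}
\lceil k/2 \rceil \;\geq\; n - \a(G).
\end{equation*}
Rearranging yields $k \geq 2(n - \a(G)) - 1$, which is exactly the desired bound.

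There is no serious obstacle here; the entire argument is a one-line pigeonhole observation, and the only thing to be careful about is the ceiling function when converting the inequality on $\lceil k/2 \rceil$ into a lower bound on $k$ itself. The proof is structurally a counting argument on the parity of labels, and no further properties of $G$ beyond its order and independence number are needed.
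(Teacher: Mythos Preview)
Your proof is correct and follows essentially the same approach as the paper's: both observe that the even-labeled vertices form an independent set, so at most $\a(G)$ labels are even and at least $\abs{V(G)}-\a(G)$ must be odd, yielding the bound. Your version is slightly more explicit in handling the ceiling to convert the count of odd labels into the stated inequality on $k$.
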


\begin{proof}
Under any coprime labeling of $G$, the vertices with even labels must form an independent set. Hence at most $\a(G)$ even integers may be used to label $G$. The lower bound follows from noting that at least $\abs{V(G)} - \a(G)$ odd integers must be used as labels.
\end{proof}

The following corollary of Proposition \ref{independence} has been stated elsewhere in the literature, for instance in \cite{fu} and \cite{seoud2}.

\begin{corollary} \label{prime-indep}
If $G$ is a prime graph, then $\a(G) \geq \left\lfloor \frac{\abs{V(G)}}{2} \right\rfloor$. 
\end{corollary}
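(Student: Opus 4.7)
The plan is to derive this as a direct numerical consequence of Proposition \ref{independence}, since the hypothesis that $G$ is prime forces the upper bound $\mathfrak{pr}(G) = \abs{V(G)}$, which combined with the lower bound from the independence proposition yields an inequality purely between $\abs{V(G)}$ and $\a(G)$.

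Concretely, I would set $n = \abs{V(G)}$ and note that primality of $G$ means that $\mathfrak{pr}(G) = n$. Proposition \ref{independence} gives $\mathfrak{pr}(G) \geq 2(n - \a(G)) - 1$, so substituting we obtain $n \geq 2n - 2\a(G) - 1$, which rearranges to $\a(G) \geq (n-1)/2$. Since $\a(G)$ is an integer, this forces $\a(G) \geq \lceil (n-1)/2 \rceil$, and a quick check in the two parities (even $n$ gives $(n-1)/2$ rounded up to $n/2$, odd $n$ gives exactly $(n-1)/2$) shows this equals $\lfloor n/2 \rfloor$ in both cases.

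There is no real obstacle here — the entire argument is an algebraic manipulation of the inequality already proved in Proposition \ref{independence}. The only subtlety worth flagging in the write-up is the ceiling/floor conversion $\lceil (n-1)/2 \rceil = \lfloor n/2 \rfloor$, which is a one-line case split on the parity of $n$ but should be made explicit so that the reader sees why the slightly weaker bound $(n-1)/2$ can be upgraded to $\lfloor n/2 \rfloor$ without any further input from the coprime labeling structure.
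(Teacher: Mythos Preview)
Your proposal is correct and is exactly the argument the paper intends: the corollary is presented in the paper without proof, explicitly as a consequence of Proposition~\ref{independence}, and your derivation (substituting $\mathfrak{pr}(G)=n$ into the inequality and using integrality plus the identity $\lceil (n-1)/2\rceil=\lfloor n/2\rfloor$) is the natural way to fill in that step.
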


\section{Minimum coprime numbers of $K_n \odot \overline{K}_m$} \label{corona}
In this section we prove Theorem \ref{corona-thm}.

\begin{proof}[Proof of Theorem \ref{corona-thm}]
Set $N = \max(p_{n-1},mn+n)$. It is apparent that $\mathfrak{pr}(K_n \odot \overline{K}_m) \geq mn+n$ as $\abs{V(K_n \odot \overline{K}_m)} = mn+n$. By Proposition \ref{subgraph}, we have $\mathfrak{pr}(K_n \odot \overline{K}_m) \geq \mathfrak{pr}(K_n)$. It is well known that $\mathfrak{pr}(K_n) = p_{n-1}$ (see \cite{asplund} for example). Therefore $\mathfrak{pr}(K_n \odot \overline{K}_m) \geq N$.

It now suffices to show that it is possible to construct a coprime labeling of $K_n \odot \overline{K}_m$ using only labels up to $N$. Denote $u_1, \dots, u_n$ to be the vertices of $K_n$ and $v_{i,1}, \dots, v_{i,m}$ to be the vertices of the $i^\text{th}$ copy of $\overline{K}_m$. Label $u_1$ with $1$ and the remaining $u_i$ with $p_{i-1}$, so that we have used the first $n-1$ primes as labels. We will label $v_{i,1}, \dots, v_{i,m}$ with integers not exceeding $mn+n$, excluding $1, p_1, \dots, p_{n-1}$.

After labeling the vertices of $K_n$ as indicated, there are at least $mn$ integers less than or equal to $N$ which have not yet been used as labels. Denote by $L_1$ the list of all such integers. We label the vertices $v_{i+1,k}$ for $1 \leq i \leq n-1$ as follows: for each $i$, select the first $m$ integers in $L_i$ which are coprime to $p_i$ (the label of $u_{i+1}$). Use these integers to label $v_{i+1,1}, \dots, v_{i+1,m}$, and remove them from $L_i$ to form a new list $L_{i+1}$. We may use any integers in $L_n$ to label $v_{1,1}, \dots, v_{1,m}$, as they are all coprime to $1$, the label of $u_1$.

We claim that at each step of this process there exist $m$ integers in $L_i$ which are coprime to $p_i$, so that this process in fact yields a coprime labeling for $K_n \odot \overline{K}_m$ using only integers up to $N$. Observe that $L_i$ contains at least $\abs{L_1}-m(i-1) = m(n-i+1)$ elements. Suppose for the sake of contradiction that there are at most $m-1$ elements in $L_i$ that are coprime to $p_i$; then the remaining $m(n-i)+1$ elements must all be multiples of $p_i$.

We consider the following cases. Note that by \cite{tout, youssef, asplund}, the statement in the theorem is known for all $n \leq 3$ or $m \leq 2$, so we may assume that $n \geq 4$ and $m \geq 3$.
\begin{itemize}
\item If $p_i > n$, we have
\begin{equation*}
p_i - \frac{ip_i}{n}+\frac{p_i}{mn} \geq p_i-\frac{(n-1)p_i}{n} +\frac{p_i}{mn} = \frac{p_i}{n} + \frac{p_i}{mn} > 1 + \frac{1}{m}.
\end{equation*}
\item If $p_i \leq n$ and $i \geq 3$, we have
\begin{equation*}
p_i - \frac{ip_i}{n}+\frac{p_i}{mn} \geq p_i-i + \frac{p_i}{mn} > 2 > 1 + \frac{1}{m},
\end{equation*}
as $p_i - i \geq 2$ for all $i \geq 3$.
\item If $p_i \leq n$ and $i=2$, we have
\begin{equation*}
p_i-\frac{ip_i}{n}+\frac{p_i}{mn} = 3 - \frac{6}{n} + \frac{3}{mn} > \frac{3}{2} > 1 + \frac{1}{m}.
\end{equation*}
\end{itemize}
Thus, in each case, the largest integer in $L_i$ is at least
\begin{equation*}
p_i(mn-mi+1) \geq mn\left(p_i-\frac{ip_i}{n}+\frac{p_i}{mn}\right) > mn\left(1+\frac{1}{m}\right) = mn+n,
\end{equation*}
which is a contradiction.
\end{proof}

\section{Minimum coprime numbers of $P_m+P_n$} \label{join}
In this section we prove Theorem \ref{pathjoin} and discuss the minimum coprime numbers of a number of graphs related to $P_m+P_n$. We will use the following theorem and lemma.

\begin{theorem}[\cite{montgomery}, Brun--Titchmarsh theorem] \label{bt}
Denote by $\pi(x; k, a)$ the number of primes at most $x$ that are equivalent to $a$ modulo $k$. Then
\begin{equation*}
\pi(x+y; k, a) - \pi(x; k, a) \leq \frac{2y}{\varphi(k) \log(y/k)}
\end{equation*}
whenever $y \geq k$, where $\varphi$ is the Euler totient function.
\end{theorem}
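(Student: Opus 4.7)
The plan is to derive this inequality via the Selberg upper bound sieve applied to the set $\calA = \{n : x < n \leq x+y,\ n \equiv a \pmod{k}\}$. Any prime in the interval $(x, x+y]$ lying in the progression $a \pmod{k}$ and exceeding a chosen sifting parameter $\xi$ must be coprime to $P(\xi) := \prod_{p \leq \xi,\ p \nmid k} p$. Hence
\[
\pi(x+y;k,a) - \pi(x;k,a) \leq S(\calA, P(\xi)) + \xi,
\]
where $S(\calA, P(\xi))$ denotes the number of $n \in \calA$ with $(n, P(\xi)) = 1$. This trivial reduction lets us attack the primes in the short AP by an entirely multiplicative sieving problem on $\calA$.

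Next I would invoke Selberg's $\Lambda^2$-construction: for any real weights $\lambda_d$ supported on squarefree divisors of $P(\xi)$ with $d \leq \xi^{1/2}$ and $\lambda_1 = 1$, one has $\mathbf{1}[(n,P(\xi))=1] \leq \bigl(\sum_{d \mid n} \lambda_d\bigr)^2$ pointwise. Summing this bound over $n \in \calA$ and replacing the exact count $|\calA \cap d\ZZ|$ by the main term $y/(kd)$ yields
\[
S(\calA, P(\xi)) \leq \tfrac{y}{k}\sum_{d_1,d_2}\frac{\lambda_{d_1}\lambda_{d_2}}{[d_1,d_2]} + O(\xi),
\]
where the error term collects the $O(1)$ rounding for each of the $O(\xi)$ values of $d = [d_1, d_2] \leq \xi$. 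Diagonalizing the quadratic form in the $\lambda_d$ by the standard Selberg substitution minimizes the right-hand side to $\tfrac{y/k}{G_k(\xi^{1/2})}$, where $G_k(\eta) := \sum_{d \leq \eta,\ (d,k)=1} \mu^2(d)/\varphi(d)$.

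The analytic heart of the proof is then the lower bound $G_k(\eta) \geq \tfrac{\varphi(k)}{k}\log\eta$, which follows from the identity $n/\varphi(n) = \sum_{d \mid n} \mu^2(d)/\varphi(d)$ combined with $\sum_{m \leq \eta,\ (m,k)=1} 1/m \geq \tfrac{\varphi(k)}{k}\log\eta$. Taking $\xi \asymp \sqrt{y/k}$ makes the sieve error $O(\xi)$ negligible under the hypothesis $y \geq k$, and since $\log \xi^{1/2} = \tfrac{1}{2}\log(y/k) + O(1)$, this produces the target bound $\tfrac{2y}{\varphi(k)\log(y/k)}$ up to lower-order terms. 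The genuine obstacle is eliminating the lower-order slack to get the clean constant $2$ uniformly for every $y \geq k$ rather than $2 + o(1)$ in a limit; this requires the Montgomery--Vaughan sharpening, which combines an integral-form optimization of the Selberg weights with a non-asymptotic lower bound on $G_k(\eta)$. Since the theorem is invoked here purely as a black box, the plan would conclude by pointing to the detailed argument in \cite{montgomery} at this step rather than reproducing it in full.
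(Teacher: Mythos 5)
The paper offers no proof of this statement at all---it is imported verbatim from \cite{montgomery} as a classical black box---and your proposal ultimately does the same thing, ending by deferring to that reference for the sharp constant, so the two treatments agree in substance. Your Selberg-sieve outline of how the proof goes is a fair sketch of the standard argument (reduce to a sifted set, apply the $\Lambda^2$ bound, lower-bound $G_k$), though note a small bookkeeping slip: with weights supported on $d \leq \xi^{1/2}$ you need $\xi^{1/2} \asymp (y/k)^{1/2}$ (so $\xi \asymp y/k$, not $\xi \asymp \sqrt{y/k}$) to make $\log \xi^{1/2} = \tfrac{1}{2}\log(y/k) + O(1)$ and recover the constant $2$ rather than $4$---a discrepancy that is harmless here only because you hand off the uniform, clean-constant version to Montgomery--Vaughan anyway.
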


\begin{lemma} \label{eleven}
For all $x \geq 1$, there is a prime $p \in (x, 2x]$ such that $p \not\equiv 1, 10 \pmod{11}$.
\end{lemma}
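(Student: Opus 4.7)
The plan is to combine the Brun--Titchmarsh upper bound on primes lying in the two ``bad'' residue classes $1, 10 \pmod{11}$ with a lower bound on the total number of primes in $(x, 2x]$, and then to handle small $x$ by direct inspection.

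Concretely, I would apply Theorem~\ref{bt} with $k = 11$, $y = x$, and $a = 1, 10$ in turn. Since $\varphi(11) = 10$, summing the two resulting inequalities yields, for every $x > 11$,
\begin{equation*}
\#\bigl\{p \in (x, 2x] : p \text{ prime},\ p \equiv 1 \text{ or } 10 \pmod{11}\bigr\} \leq \frac{2x}{5 \log(x/11)}.
\end{equation*}
I would then invoke an explicit form of the prime number theorem---for instance a Dusart-style bound of the shape $\pi(2x) - \pi(x) \geq c\, x / \log x$ valid above some explicit threshold---to conclude that for all sufficiently large $x$ the total count of primes in $(x, 2x]$ strictly exceeds the Brun--Titchmarsh bound. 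Since asymptotically $x/\log x$ beats $2x/(5 \log x)$ by the factor $5/2$, there is plenty of slack, and a reasonable explicit threshold $X_0$ should emerge, past which one is guaranteed a prime in $(x, 2x]$ whose residue mod $11$ is neither $1$ nor $10$.

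For $1 \leq x < X_0$, the claim reduces to a finite mechanical check: for each integer $x$ in this range, list the primes in $(x, 2x]$ and verify that at least one avoids the residues $1, 10 \pmod{11}$. The main obstacle will be calibrating the constants in the asymptotic step so that $X_0$ stays small enough for this case-by-case verification to be routine; the Brun--Titchmarsh constant here is fairly generous, but for moderate $x$ the prime counts in $(x, 2x]$ can fluctuate near the threshold, so using a sufficiently sharp explicit PNT-type bound (such as Dusart's) rather than the crudest Chebyshev-type estimate will probably be needed to keep the finite verification manageable.
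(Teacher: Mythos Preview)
Your proposal is correct and follows essentially the same route as the paper: the paper applies Theorem~\ref{bt} with $k=11$, $y=x$ exactly as you describe, and for the lower bound uses the Rosser--Schoenfeld inequality $\pi(2x)-\pi(x) > \tfrac{3}{5}\,x/\log x$ (valid for $x\ge 20.5$), obtaining the explicit threshold $X_0 = 1331$ and checking $x\le 1331$ by hand. Your suggestion of a Dusart-type bound would work equally well and possibly give a smaller threshold, but the argument is otherwise identical.
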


\begin{proof}
By Corollary 3 to Theorem 2 of \cite{rosser}, we have
\begin{equation*}
\pi(2x) - \pi(x) > \frac{3}{5} \frac{x}{\log x}
\end{equation*}
whenever $x \geq 20.5$.
Combining this with Theorem \ref{bt} shows that the total number of primes in $(x,2x]$ which are not congruent to $1$ or $10$ modulo $11$ is at least
\begin{equation*}
\frac{3x}{5\log x} - \frac{2x}{5(\log x - \log 11)}.
\end{equation*}
When $x > 1331$, the above expression is positive, and we have the desired result. For $x \leq 1331$, we may manually verify the statement in the lemma.
\end{proof}

\begin{proof}[Proof of Theorem \ref{pathjoin}]
Set $L = 2 \left\lceil \frac{m-1}{2} \right\rceil +2n-1$. The lower bound $\mathfrak{pr}(P_m+P_n) \geq L$ follows from Proposition \ref{independence}, observing that $\a(P_m+P_n) = \left\lceil \frac{m}{2} \right\rceil$. Thus, to show that $\mathfrak{pr}(P_m+P_n) = L$, it suffices to construct a coprime labeling using only labels up to $L$.

By Theorem \ref{tight-join-asplund}, we know that such a labeling exists for $n \leq 4$.
Assume that $n \geq 5$ and hence $L \geq R_4 = 29$. Since $L \geq R_{n-1}$, we may label the vertices of $P_n$ with $1$ and $n-1$ primes between $\left\lceil \frac{L}{2} \right\rceil$ and $L$, which we denote $q_1, \dots, q_{n-1}$ in increasing order. By Lemma \ref{eleven}, we may assume that $q_\ell \not\equiv 1, 10\pmod{11}$ for some $1 \leq \ell \leq n-1$. Moreover, by our assumptions on $m$ and $n$, each of these primes is at least $17$. As $1$, $q_1, \dots, q_{n-1}$ are each coprime to all of the other integers up to $L$, any coprime labeling of $P_m$ using the remaining integers will yield a minimum coprime labeling for $P_m+P_n$.

If $q_1 > m+1$ we are done, as we may simply label the vertices of $P_m$ with $2, \dots, m+1$ in order. Otherwise, let $S_1$ be the ordered list of integers up to $L$, excluding $\{1, q_1, \dots, q_{n-1}\}$. We will inductively construct sets $S_i$ so that $\abs{S_i} = L - n - i + 1$ and every element less than $q_i$ contained in $S_i$ is coprime to its neighbors in $S_i$ that are also less than $q_i$.
It is clear that $S_1$ satisfies the conditions above. For $1 \leq i \leq n-2$, we construct $S_{i+1}$ from $S_i$ as follows.
\begin{itemize}
\item  If $q_{i+1} > q_i+2$ and $q_i > q_{i-1} + 2$ (if $i>1$), then $S_i$ contains the sequence $q_i-2, q_i-1, q_i+1, q_i+2$, where $q_i-2$ and $q_i+2$ are odd and composite. Observe that $3$ divides at most one of $q_i-2$ and $q_i+2$. If $3 \nmid q_i+2$, we can set $S_{i+1} = S_i \setminus \{q_i+1\}$, as $\gcd(q_i-2, q_i-1) = \gcd(q_i-1, q_i+2) = 1$. Otherwise, $3 \nmid q_i-2$, so we can set $S_{i+1} = S_i \setminus \{q_i-1\}$, as $\gcd(q_i-2, q_i+1) = \gcd(q_i+1, q_i+2) = 1$.

\item If $q_{i+1} = q_i+2$, then it suffices to set $S_{i+1} = S_i \setminus \{q_i+1\}$.

\item If $q_{i+1} > q_i+2$, $i>1$, and $q_i = q_{i-1}+2$, then $S_i$ contains the sequence $q_i-4, q_i-3, q_i+1, q_i+2$, where $q_i-4$ and $q_i+2$ are odd and composite. Observe that $5$ divides at most one of $q_i-4$ and $q_i+2$. If $5 \nmid q_i+2$, we can set $S_{i+1} = S_i \setminus \{q_i+1\}$, as $\gcd(q_i-4, p_i-3) = \gcd(q_i-3, q_i+2) = 1$ necessarily. Otherwise, $5 \nmid q_i-4$, so we can set $S_{i+1} = S_i \setminus \{q_i-3\}$, as $\gcd(q_i-4, q_i+1) = \gcd(q_i+1, q_i+2) = 1$.
\end{itemize}
We now construct a final ordered list $S_n \subseteq S_{n-1}$ such that every element in $S_n$ is coprime to its neighbors. If $q_{n-1} = L$ we are done, as we may set $S_n = S_{n-1}$. Otherwise, since $L$ is always odd, we have $q_{n-1} + 2 \leq L$, so $S_{n-1}$ contains the sequence $q_{n-1}-k-1$, $q_{n-1}-k$, $q_{n-1} + 1$, $q_{n-1}+2$, where $k = 1$ or $3$ depending on whether $q_{n-1}-2$ is composite or prime respectively. As in the cases above, it is always possible to obtain $S_n$ by removing one of $q_{n-1}-k$ and $q_{n-1}+1$.

If $q_{n-1}=L$, then we have $\abs{S_n} = \abs{S_{n-1}} = L-2n+2 \geq m$, and we obtain a minimum coprime labeling for $P_m+P_n$ by labeling the vertices of $P_m$ with the elements of $S_n$ in order. Otherwise, we have $\abs{S_n} = L-2n+1$. If $m$ is even, $L-2n+1 = m$ and we again obtain a minimum coprime labeling for $P_m+P_n$. If $m$ is odd, $L-2n+1 = m-1$, and we are short of one label. We resolve this by recalling that there exists some $q_\ell \not\equiv 1, 10 \pmod{11}$. To construct $S_n$, we have deleted one of $q_i-1$, $q_i+1$ for each $q_i$; hence there is some $x = q_\ell \pm 1$, $11 \nmid x$ that does not appear in $S_n$. As $q_1 > 23$, we may label the vertices of $P_m$ with the sequence
\begin{equation*}
x, 11, 12, 5, 4, 3, 8, 7, 6, 13, 10, 9, 14,
\end{equation*}
followed by all of the elements $x \in S_n$ such that $15 \leq x \leq L$, followed by $2$. This yields a minimum coprime labeling for $P_m+P_n$.
\end{proof}

The condition $M \geq R_{N-1} - 2N + 1$ in Theorem \ref{pathjoin} is sufficient but certainly not necessary; by Theorem \ref{tight-join-asplund}, it is known, for instance, that if $N=3$ or $4$, it suffices to set $M \geq N$.
The following theorem extends this result to $N=5$.

\begin{theorem} \label{path-5}
For $m \geq 5$, the minimum coprime number of $P_m + P_5$ is
\begin{equation*}
\mathfrak{pr}(P_m+P_5) = \begin{cases} m+8 & \text{if } $m$ \text{ is odd} \\ m+9 & \text{if } $m$ \text{ is even}.\end{cases}
\end{equation*}
\end{theorem}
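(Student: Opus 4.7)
The plan is to first establish the lower bound via Proposition~\ref{independence}. Since any independent set in $P_m + P_5$ lies wholly within one side of the join, $\a(P_m + P_5) = \lceil m/2 \rceil$ for every $m \geq 5$, and the bound $\mathfrak{pr}(P_m + P_5) \geq 2(m + 5 - \lceil m/2 \rceil) - 1$ reduces exactly to $m+8$ when $m$ is odd and to $m+9$ when $m$ is even.

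For the matching upper bound I would split into two regimes. When $m \geq 20$, the claimed value $L$ satisfies $L \geq R_4 = 29$, so Theorem~\ref{pathjoin} (applied with $N = 5$ and $M = R_4 - 2N + 1 = 20$) directly produces a minimum coprime labeling. The substance of Theorem~\ref{path-5} therefore lies in the finitely many residual values $5 \leq m \leq 19$, each of which I would handle by exhibiting an explicit minimum coprime labeling.

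The template for these explicit labelings mirrors Theorem~\ref{pathjoin}: assign to the five vertices of $P_5$ a set $B$ of five integers in $\{1,\dots,L\}$ whose collective prime divisors are disjoint from those of the complementary set $A$ chosen for $P_m$, arrange $B$ along $P_5$ with consecutive labels coprime, and likewise arrange $A$ along $P_m$. When $L$ is large enough for the interval $(L/2, L]$ to contain four primes, the construction from Theorem~\ref{pathjoin} transplants with only minor adjustments. When it is not, the $P_5$ side must be populated by a mix of primes and small prime powers, confining the remaining small primes to $A$; for example, when $m = 5$ one can label the two copies of $P_5$ by $1,3,5,7,9$ and $2,11,4,13,8$ respectively, with disjoint prime support $\{3,5,7\}$ versus $\{2,11,13\}$.

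The main obstacle is that for the smallest values of $m$, the scarcity of primes in the label range forces small primes (notably $2$ and $3$) onto one particular side of the join, which then constrains the parity pattern along that side and requires a careful interleaving of odd and even labels to achieve consecutive coprimality on the $P_m$ path. Verifying the template for each of the fifteen residual values reduces to a somewhat fiddly but elementary case analysis, essentially a bookkeeping exercise along the lines of the $m=5$ example above.
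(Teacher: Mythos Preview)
Your proposal is correct and follows essentially the same route as the paper: the lower bound via Proposition~\ref{independence} with $\a(P_m+P_5)=\lceil m/2\rceil$, the appeal to Theorem~\ref{pathjoin} for $m\geq 20$ (using $R_4=29$), and explicit labelings with disjoint prime support on the two sides for the residual cases $5\leq m\leq 19$. The paper carries out exactly this case analysis, grouping the small cases into four blocks ($m=5$; $6\leq m\leq 10$; $11\leq m\leq 17$; $m=18,19$) and giving one explicit labeling per block; your $m=5$ example differs from the paper's but is equally valid.
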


\begin{proof}
We prove the theorem by casework.
\begin{itemize}
\item If $m \geq 20$, we are done by Theorem \ref{pathjoin} as $R_4 = 29$.
\item If $m=5$, we may label the vertices of the first path with the sequence $1, 3, 5, 9, 13$, and the vertices of the second path with $2, 7, 4, 11, 8$.
\item If $6 \leq m \leq 10$, we may label the vertices of $P_5$ with the sequence $3, 5, 9, 1, 15$, and the vertices of $P_m$ with the first $m$ integers in the sequence
\begin{equation*}
2, 7, 4, 11, 8, 13, 14, 17, 16, 19.
\end{equation*}
\item If $11 \leq m \leq 17$, we may label the vertices of $P_5$ with the sequence $1, 11, 13, 17, 19$, and the vertices of $P_m$ with the first $m$ integers in the sequence
\begin{equation*}
2, 3, 4, 5, 6, 7, 8, 9, 14, 15, 16, 21, 10, 23, 12, 25, 24.
\end{equation*}
\item If $m=18$ or $19$, we may label the vertices of $P_5$ with the sequence $1, 13, 17, 19, 23$, and the vertices of $P_m$ with the first $m$ integers in the sequence
\begin{equation*}
2, 3, 4, 5, 6, 7, 26, 9, 10, 11, 14, 15, 16, 21, 22, 25, 8, 27, 20.
\end{equation*}
\end{itemize}
\end{proof}

Our work on the join of two paths also offers insight on the complete bipartite graphs. Fu and Huang \cite{fu} proved that, for $m \leq n$, $K_{m,n}$ is prime if and only if $m \leq \pi(m+n) - \pi(\frac{m+n}{2}) + 1$. Seoud, Diab, and Elsakhawi \cite{seoud} showed that $K_{2,n}$ is prime for all $n$ and that $K_{3,n}$ is prime unless $n=3,7$. Berliner et al. \cite{berliner} provide all values of $n$ for $m \leq 13$ for which $K_{m,n}$ is prime and note that $K_{m,n}$ is prime for all $n \geq R_{m-1}-m$, as implied by \cite{fu}. They also ask about the behavior of $\mathfrak{pr}(K_{m,n})$ when $n < R_{m-1}-m$. By Proposition \ref{subgraph}, Theorem \ref{pathjoin} immediately implies the following bound on $\mathfrak{pr}(K_{m,n})$, which provides a partial answer to this question.

\begin{corollary}
If $n \geq R_{m-1}-2m+1$, then
\begin{equation*}
\mathfrak{pr}(K_{m,n}) \leq 2 \left\lceil \frac{n-1}{2} \right\rceil + 2m - 1.
\end{equation*}
\end{corollary}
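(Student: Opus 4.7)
The plan is short because the corollary is essentially an immediate consequence of the previous theorem combined with the subgraph monotonicity from Section \ref{prelim}. First I would observe that $K_{m,n}$ is a spanning subgraph of $P_m + P_n$: the two graphs have the same vertex set, and every edge of $K_{m,n}$ goes between the two parts, hence is one of the join edges present in $P_m + P_n$ (the path $P_m + P_n$ merely adds the path edges inside each part on top of the complete bipartite skeleton). By Proposition \ref{subgraph}, this inclusion yields
\begin{equation*}
\mathfrak{pr}(K_{m,n}) \leq \mathfrak{pr}(P_m + P_n).
\end{equation*}

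Next I would invoke Theorem \ref{pathjoin} with the roles of the two path indices exchanged, treating $m$ as the small (bounded) parameter and $n$ as the large one; concretely, applying the theorem with $N = m$ and the large index equal to $n$, the hypothesis becomes exactly $n \geq R_{m-1} - 2m + 1$, which is the assumption of the corollary. The theorem then gives
\begin{equation*}
\mathfrak{pr}(P_m + P_n) = \begin{cases} n + 2m - 2 & \text{if } n \text{ is odd,} \\ n + 2m - 1 & \text{if } n \text{ is even,} \end{cases}
\end{equation*}
and both cases are captured uniformly by $2\left\lceil \frac{n-1}{2} \right\rceil + 2m - 1$. Combining with the subgraph bound above produces the claimed inequality.

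There is no genuine obstacle here: the only things to be careful about are (i) matching the hypotheses of Theorem \ref{pathjoin} when the two path lengths are swapped, so that the Ramanujan condition is placed on the correct variable, and (ii) rewriting the even/odd split in the compact ceiling form that appears in the statement. Both are bookkeeping steps, and the entire argument fits in a few lines.
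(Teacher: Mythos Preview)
Your proposal is correct and matches the paper's own argument exactly: the paper simply states that the corollary follows immediately from Theorem~\ref{pathjoin} via Proposition~\ref{subgraph}, which is precisely the spanning-subgraph monotonicity plus role-swap you describe. The only minor point worth noting is that the application of Theorem~\ref{pathjoin} implicitly requires $m\geq 2$ so that the theorem's hypothesis $2\leq n'\leq N$ is satisfied after swapping indices, but this is harmless bookkeeping.
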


In fact, a slight modification of the first part of the proof of Theorem \ref{pathjoin} enables us to show that, more generally,
\begin{theorem} \label{complete-bipartite}
If $m \leq n \leq R_{m-1}-m$, then $\mathfrak{pr}(K_{m,n}) \leq R_{m-1}$.
\end{theorem}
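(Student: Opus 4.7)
The plan is to mimic the opening moves of the proof of Theorem \ref{pathjoin}, but with the much simpler setting afforded by bipartiteness: in $K_{m,n}$ the only coprimality constraints are across the bipartition, so none of the careful reshuffling of the ordered list $S_i$ carried out there is needed.

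Write the bipartition of $K_{m,n}$ as $A \sqcup B$ with $\abs{A} = m$ and $\abs{B} = n$. By definition $R_{m-1}$ is the least integer for which $\pi(x) - \pi(x/2) \geq m-1$ holds for all $x \geq R_{m-1}$, so the interval $(R_{m-1}/2,\,R_{m-1}]$ contains at least $m-1$ primes $q_1, \dots, q_{m-1}$. I would label the $m$ vertices of $A$ with the integers $1,\,q_1,\dots,q_{m-1}$.

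The key observation is that each $q_i > R_{m-1}/2$, so $q_i$ is the only multiple of $q_i$ in $\{1,\dots,R_{m-1}\}$; being prime, $q_i$ is then coprime to every other integer in $\{1,\dots,R_{m-1}\}$. Thus every one of the $R_{m-1} - m$ integers in $\{1,\dots,R_{m-1}\} \setminus \{1,q_1,\dots,q_{m-1}\}$ is coprime to each of the labels already placed on $A$. Since $n \leq R_{m-1} - m$ by hypothesis, there are enough leftover integers to assign an arbitrary $n$ of them to the vertices of $B$; as $K_{m,n}$ has no edges within $A$ or within $B$, every edge of $K_{m,n}$ then has coprime endpoints. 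The resulting labeling uses only integers up to $R_{m-1}$, yielding $\mathfrak{pr}(K_{m,n}) \leq R_{m-1}$.

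There is essentially no obstacle beyond matching the bookkeeping to the definitions. The only two things to verify are that the Ramanujan-prime definition actually supplies $m-1$ primes in the upper half of $[1,R_{m-1}]$ (immediate from $x = R_{m-1}$), and that the number of labels we still need for $B$ does not exceed the supply of integers coprime to all of $A$'s labels (immediate from the hypothesis $n \leq R_{m-1}-m$). No analogue of Lemma \ref{eleven} or of the subtle parity-and-$m$-odd fix-up from Theorem \ref{pathjoin} is required.
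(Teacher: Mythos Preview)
Your proof is correct and essentially identical to the paper's: both label the $m$-side with $1$ together with $m-1$ primes in $(R_{m-1}/2,\,R_{m-1}]$ (the paper phrases this as ``the $m-1$ largest primes up to $R_{m-1}$''), observe these are coprime to every other integer up to $R_{m-1}$, and then use any $n \leq R_{m-1}-m$ of the leftovers on the other side.
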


\begin{proof}
We label the vertices of $\overline{K}_m$ with $1$ and the $m-1$ largest primes up to $R_{m-1}$, which are, in particular, at least $\lceil \frac{R_{m-1}}{2} \rceil$. Hence they are each coprime to all of the other integers up to $R_{m-1}$. We may use any $n$ of the remaining integers to label the vertices of $\overline{K}_n$.
\end{proof}

However, we note that for $N \geq 6$, it is in general not sufficient to take $M \geq N$.

\begin{remark} \label{failure}
There exist positive integers $m \geq n$ such that
\begin{equation*}
\mathfrak{pr}(P_m+P_n) > 2\left\lceil \frac{m-1}{2} \right\rceil + 2n - 1.
\end{equation*}
\end{remark}

We now present a number of examples and counterexamples for Remark \ref{failure} that illustrate the complexity of the behavior of $\mathfrak{pr}(P_m+P_n)$ for $n \leq m \leq R_{n-1} - 2n$.

\begin{example}[$n=6$] \label{path6}
\begin{itemize}
\item For $6 \leq m \leq 9$,
\begin{equation*}
\mathfrak{pr}(P_m+P_6) = \begin{cases} m+10 & \text{if } m \text{ is odd} \\ m+11 & \text{if } m \text{ is even}.\end{cases}
\end{equation*}
This follows from labeling the vertices of $P_6$ with the sequence $3, 5, 9, 1, 15, 11$ and the vertices of $P_m$ with the first $m$ integers in the sequence
\begin{equation*}
2, 7, 4, 17, 8, 13, 14, 19, 16.
\end{equation*}

\item For $m=10$ or $11$, this equality does not hold. We prove this by contradiction: if $\mathfrak{pr}(P_{10}+P_6) = \mathfrak{pr}(P_{11}+P_6) = 21$, then there exist minimum coprime labelings of $P_{10}+P_6$ and $P_{11}+P_6$ using $5$ or $6$ even integers respectively and each of the odd integers up to $21$. The vertices of $P_6$ must be labeled with the set of integers $S = \{3, 5, 7, 9, 15, 21\}$, as no subset of $S$ is pairwise coprime to its complement in $S$, so all of the integers in $S$ must be used as labels for the same path, there are too many odds in $S$ to be used as labels in $P_{10}$ or $P_{11}$. However, because $S$ contains a total of $6$ integers, $4$ of which are multiples of $3$, it is not possible to arrange the elements of $S$ in a sequence such that adjacent elements are coprime. Therefore $\mathfrak{pr}(P_m+P_6) > 21$ for $m=10$ or $11$. The same reasoning shows that the minimum coprime numbers of $P_{10} + P_6$ and $P_{11}+P_6$ exceeds $22$. In fact, we may construct labelings to show that $\mathfrak{pr}(P_{10}+P_6) = \mathfrak{pr}(P_{11}+P_6) = 23$.
\end{itemize}
\end{example}

\begin{example}[$n=7$] \label{path7}
\begin{itemize}
\item For $m=7$, we have $\mathfrak{pr}(P_7+P_7) = 19$. This follows from labeling the vertices of one copy of $P_7$ with the sequence $3, 5, 9, 1, 15, 11, 13$, and the vertices of the second copy with the sequence $2, 7, 4, 17, 8, 19, 16$.

\item For $m=8$ or $10$, we have $\mathfrak{pr}(P_m+P_7) = m+13$. This follows from labeling the vertices of $P_7$ with the sequence $3, 5, 9, 7, 15, 1, 21$, and the vertices of $P_m$ with the first $m$ integers in the sequence
\begin{equation*}
2, 11, 4, 13, 8, 17, 16, 19, 22, 23.
\end{equation*}

\item For $m=9$, the lower bound is not tight. As in Example \ref{path6}, we prove this by contradiction. If $\mathfrak{pr}(P_9+P_7) = 21$, then there would exist a minimum coprime labeling of $P_9+P_7$ using $5$ of the even integers and all of the odd integers up to $21$. In particular, all the integers in the set $S = \{3, 5, 7, 9, 15, 21\}$ must be used as labels for the vertices of $P_7$, by the same reasoning as before. However, there are fewer than $5$ even integers up to $21$ that are coprime to all of the elements in $S$. We can construct a coprime labeling to show that $\mathfrak{pr}(P_9+P_7)=22$. This case demonstrates that the minimum coprime number of the join of certain paths is constrained by the even integers rather than the odds; it is also interesting that the lower bound may be tight for both $m-1$ and $m+1$ but fail to be tight for $m$.
\end{itemize}
\end{example}

Our methods in Theorem \ref{pathjoin} also enable us to prove the following results on the joins of cycles and paths.

\begin{theorem} \label{cycle-cycle}
For any positive integer $N$, if $M \geq R_{N-1}-2N + 1$, then for all $m \geq M$ and $n \leq N$, the minimum coprime number of $C_m+C_n$ is
\begin{equation*}
\mathfrak{pr}(C_m+C_n) = \begin{cases} m+2n & \text{if } m \text{ is odd} \\ m+2n-1 & \text{if } m \text{ is even}.\end{cases}
\end{equation*}
\end{theorem}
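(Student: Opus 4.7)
Proof plan. We closely mimic the proof of Theorem \ref{pathjoin}, adjusting for the cycle at each end. For the lower bound, an independent set of the join $C_m+C_n$ is contained entirely in one of the two parts, so $\alpha(C_m+C_n)=\max(\lfloor m/2\rfloor,\lfloor n/2\rfloor)=\lfloor m/2\rfloor$; here $m\geq n$ is ensured since $m\geq M\geq R_{N-1}-2N+1$, which implies $m\geq N$ in the relevant range $N\geq 3$. Proposition \ref{independence} then gives $\mathfrak{pr}(C_m+C_n)\geq 2\lceil m/2\rceil+2n-1$, matching $m+2n-1$ when $m$ is even and $m+2n$ when $m$ is odd.

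For the upper bound, let $L$ denote the stated target; a parity check shows $L$ is always odd. Choose $n-1$ primes $q_1<\cdots<q_{n-1}\in(L/2,L]$, possible since $L\geq R_{n-1}$, and label $C_n$ cyclically by $1,q_1,\ldots,q_{n-1}$ in any order: these labels are pairwise coprime, so every $C_n$-edge is satisfied. For $C_m$, regard $\{2,3,\ldots,L\}$ as a cycle with edges between consecutive integers together with the wrap-around edge $\{L,2\}$, which is coprime because $L$ is odd. Now apply the exact same deletion rules as the construction of $S_n$ in Theorem \ref{pathjoin}: for each prime $q_i$, delete $q_i$ together with one of $q_i\pm 1$ (or $q_i\pm 3$ in the twin-prime sub-case), with the choice governed by the same divisibility-by-$3$-or-$5$ conditions. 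This preserves coprimality of every consecutive pair in the surviving cycle and leaves $L-2n+1$ vertices whenever $q_{n-1}<L$.

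Two adjustments complete the construction. If $q_{n-1}=L$, then deleting $L$ merges its cycle-neighbors $L-1$ and $2$, which share the factor $2$; we additionally delete one of $\{2,L-1\}$, or $\{2,L-3\}$ when $q_{n-2}=L-2$ forces $L-1$ to have been already removed when handling $q_{n-2}$. Since $L$ is prime and larger than $3$, $L\not\equiv 0\pmod 3$, and one of these choices always restores a coprime wrap-around, still leaving $L-2n+1$ vertices. When $m$ is even this is exactly $m$; when $m$ is odd it is $m+1$, so we delete one further vertex, namely $4$: a short case-check using $q_i>L/2$ and the twin-prime constraint shows that no step ever removes $3$, $4$, or $5$, so the cycle-neighbors of $4$ at this stage remain $3$ and $5$, with $\gcd(3,5)=1$. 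The main obstacle is the top-of-the-interval bookkeeping when $q_{n-1}=L$ and possibly $q_{n-2}=L-2$: a finite enumeration of sub-cases, in each of which one of $\{2,L-1,L-3\}$ can be chosen to repair the wrap-around using only that $L$ is an odd prime exceeding $3$.
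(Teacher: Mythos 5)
Your argument is correct in substance but takes a genuinely different route from the paper's for the upper bound. The paper reuses the $P_m+P_n$ labeling of Theorem \ref{pathjoin} verbatim (with top label $m+2n-2$ for odd $m$), closes both paths into cycles, and for odd $m$ patches in the single new label $m+2n$ at the top of the list $S_n$, with a case analysis on whether $m+2n-4$ is prime and whether $3 \mid m+2n$. You instead rebuild the construction from the full interval $\{2,\dots,L\}$ with $L$ equal to the target value $m+2n$ (resp.\ $m+2n-1$), viewed cyclically with the wrap-around edge $\{L,2\}$, and delete down to $m$ labels, removing the vertex $4$ to fix the parity when $m$ is odd. Your version buys two things: since your primes satisfy $q_i > L/2$ with $L$ the actual maximum label, coprimality of the join edges is immediate (the paper's choice $q_i \geq \lceil (m+2n-2)/2\rceil$ only barely avoids a conflict with $2q_i = m+2n-1$, which happens not to be used); and you avoid Lemma \ref{eleven} and the special prefix $x,11,12,5,4,3,\dots$ entirely, since for odd $m$ you have one label too many rather than one too few. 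The costs are the extra bookkeeping at $q_{n-1}=L$, which you correctly reduce to a finite check (though your appeal to $L \not\equiv 0 \pmod 3$ is unnecessary: deleting $L-1$, or $L-3$ in the twin case, always leaves an odd number adjacent to $2$, so the option of deleting $2$ never needs to be considered), and the verification that $3,4,5$ survive all repairs, which your bound $q_i > L/2 > 8$ handles for $n \geq 5$. One loose end: like the paper, you should dispose of $n=3,4$ separately (the paper cites Asplund--Fox), since the deletion rules of Theorem \ref{pathjoin} are only set up under the assumption $n \geq 5$, where each $q_i \geq 17$.
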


\begin{proof}
Label the vertices of $P_m+P_n$ as in the proof of Theorem \ref{pathjoin}. Then the labels for $P_n$ are all either $1$ or prime, so in particular the endpoints of $P_n$ have coprime labels and we may join them to form $C_n$ without violating the coprime labeling condition. If $m$ is even, then one endpoint of $P_m$ is labeled with $2$ and the other is labeled with an odd integer, so we may also join the endpoints of $P_m$ to form $C_m$ without violating the coprime labeling condition, obtaining a coprime labeling of $C_m+C_n$. This labeling is a minimum coprime labeling because we have $\mathfrak{pr}(C_m+C_n) \geq \mathfrak{pr}(P_m+P_n)$ by Proposition \ref{subgraph}.

If $m$ is odd, $\mathfrak{pr}(C_m+C_n) \geq m+2n$ by Proposition \ref{independence}, as $\a(C_m+C_n) = \lfloor \frac{m}{2} \rfloor$. Recall that the ordered list $S_n$ contains either $m$ or $m-1$ elements. If $\abs{S_n} = m-1$, the last element of $S_n$ is $L = m+2n-2$. Labeling the vertices of $P_m$ with the sequence $S_n$ with $m+2n$ appended and joining the endpoints of $P_m$ to form $C_m$ therefore yields a minimum coprime labeling for $C_m+C_n$. If $\abs{S_n} = m$, then $p_{n-1} = L = m+2n-2$. We may assume that $n \geq 5$ as the other cases are known by \cite{asplund}; therefore $p_1 > 17$. We consider the following cases.
\begin{itemize}
\item If $m+2n-4$ is composite, the last two elements of $S_n$ are $m+2n-4$ and $m+2n-3$. Labeling the vertices of $P_m$ with the sequence $S_n$, replacing $m+2n-3$ by $m+2n$, and joining the endpoints of $P_m$ to form $C_m$ therefore yields a minimum coprime labeling for $C_m+C_n$, as we have $\gcd(m+2n-4, m+2n) = \gcd(m+2n, 2) = 1$.

\item If $m+2n-4$ is prime, the last two elements of $S_n$ are $m+2n-6$ and $m+2n-5$. If $3 \nmid m+2n$, we may replace $m+2n-5$ with $m+2n$ to obtain a minimum coprime labeling for $C_m+C_n$, as above. If $3 \mid m+2n$, we join the endpoints of $P_m$ to form $C_m$, replace $m+2n-5$ with $m+2n$, and rearrange the labels surrounding the endpoints to read
\begin{equation*}
\dots, m+2n-6, 2, m+2n, 4, 3, 5, 6, \dots.
\end{equation*}
\end{itemize}
\end{proof}

The proof of Theorem \ref{cycle-cycle} immediately implies the following two corollaries on the join of a cycle and a path.
\begin{corollary} \label{cycle-path}
For any positive integer $N$, if $M \geq R_{N-1}-2N + 1$, then for all $m \geq M$ and $n \leq N$, the minimum coprime number of $C_m+P_n$ is
\begin{equation*}
\mathfrak{pr}(C_m+P_n) = \begin{cases} m+2n & \text{if } m \text{ is odd} \\ m+2n-1 & \text{if } m \text{ is even}.\end{cases}
\end{equation*}
\end{corollary}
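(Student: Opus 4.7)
The plan is to sandwich $C_m+P_n$ between the graphs $P_m+P_n$ and $C_m+C_n$, for which we already know the minimum coprime numbers by Theorems \ref{pathjoin} and \ref{cycle-cycle}, and then to pin down $\mathfrak{pr}(C_m+P_n)$ by combining the spanning-subgraph bound (Proposition \ref{subgraph}) with the independence bound (Proposition \ref{independence}). Throughout, we work under the standing hypothesis $m \geq M \geq R_{N-1}-2N+1$ and $n \leq N$, so Theorems \ref{pathjoin} and \ref{cycle-cycle} both apply.

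For the upper bound, I would observe that $C_m+P_n$ is a spanning subgraph of $C_m+C_n$: the two graphs share the same vertex set, and the former is obtained from the latter by deleting the single edge joining the endpoints of the copy of $C_n$. Proposition \ref{subgraph} together with Theorem \ref{cycle-cycle} then gives
\[
\mathfrak{pr}(C_m+P_n) \leq \mathfrak{pr}(C_m+C_n) = \begin{cases} m+2n & \text{if $m$ is odd,} \\ m+2n-1 & \text{if $m$ is even.} \end{cases}
\]
Concretely, the explicit labeling produced in the proof of Theorem \ref{cycle-cycle} remains valid after dropping the edge closing $C_n$ into a path $P_n$, since we are only removing a coprimality constraint.

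For the lower bound I would split on the parity of $m$. When $m$ is even, $P_m+P_n$ is a spanning subgraph of $C_m+P_n$, so Proposition \ref{subgraph} together with Theorem \ref{pathjoin} yields $\mathfrak{pr}(C_m+P_n) \geq \mathfrak{pr}(P_m+P_n) = m+2n-1$, matching the upper bound. When $m$ is odd, this subgraph route only gives $m+2n-2$, so I would instead appeal to Proposition \ref{independence}. Any independent set in a join $G+H$ must lie entirely in $G$ or entirely in $H$, hence $\alpha(C_m+P_n) = \max(\alpha(C_m), \alpha(P_n)) = \lfloor m/2 \rfloor$ once $m$ is large (which is ensured by $m \geq R_{N-1}-2N+1$). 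Substituting $\lfloor m/2 \rfloor = (m-1)/2$ into Proposition \ref{independence} gives
\[
\mathfrak{pr}(C_m+P_n) \geq 2\bigl(m+n-\tfrac{m-1}{2}\bigr)-1 = m+2n,
\]
again matching the upper bound.

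There is no genuine obstacle here; the content of the corollary is extracted directly from the proof of Theorem \ref{cycle-cycle}, which is why the paper calls it an immediate consequence. The only mildly subtle point is recognizing that the lower-bound argument must switch ingredients with the parity of $m$: the independence bound is strictly sharper than the subgraph bound for odd $m$ (because $\alpha(C_m) = \lfloor m/2 \rfloor$ is one smaller than $\alpha(P_m) = \lceil m/2 \rceil$), while for even $m$ the two bounds coincide and either one closes the gap.
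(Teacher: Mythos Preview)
Your proof is correct and matches the paper's intended argument: the paper states that the corollary follows immediately from the proof of Theorem \ref{cycle-cycle}, and what you have written is precisely the unpacking of that remark. The construction in Theorem \ref{cycle-cycle} already yields a valid labeling of $C_m+P_n$ before one closes $P_n$ into $C_n$, giving the upper bound, while the lower bound uses the same two ingredients (Proposition \ref{subgraph} for even $m$, Proposition \ref{independence} with $\alpha(C_m+P_n)=\lfloor m/2\rfloor$ for odd $m$) that appear in the proof of Theorem \ref{cycle-cycle}.
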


\begin{corollary} \label{path-cycle}
For any positive integer $N$, if $M \geq R_{N-1}-2N + 1$, then for all $m \geq M$ and $n \leq N$, the minimum coprime number of $P_m+C_n$ is
\begin{equation*}
\mathfrak{pr}(P_m+C_n) = \begin{cases} m+2n-2 & \text{if } m \text{ is odd} \\ m+2n-1 & \text{if } m \text{ is even}.\end{cases}
\end{equation*}
\end{corollary}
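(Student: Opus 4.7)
The plan is to sandwich $\mathfrak{pr}(P_m+C_n)$ between $\mathfrak{pr}(P_m+P_n)$ and the value claimed, using the labeling already constructed in the proof of Theorem \ref{pathjoin}.

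For the lower bound, observe that $P_m+P_n$ is a spanning subgraph of $P_m+C_n$, obtained by deleting the cycle edge joining the two endpoints of $C_n$. Proposition \ref{subgraph} then gives
\begin{equation*}
\mathfrak{pr}(P_m+C_n) \;\geq\; \mathfrak{pr}(P_m+P_n) \;=\; \begin{cases} m+2n-2 & \text{if } m \text{ is odd,} \\ m+2n-1 & \text{if } m \text{ is even,}\end{cases}
\end{equation*}
where the equality uses Theorem \ref{pathjoin} together with the hypothesis $M \geq R_{N-1}-2N+1$.

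For the upper bound, I would take the minimum coprime labeling of $P_m+P_n$ constructed in the proof of Theorem \ref{pathjoin} and check that it already serves as a coprime labeling of $P_m+C_n$. The key observation, which is also the heart of the proof of Theorem \ref{cycle-cycle}, is that in that construction the vertices of $P_n$ are labeled by $1$ together with $n-1$ primes lying in the interval $\bigl[\lceil L/2\rceil, L\bigr]$, where $L = 2\lceil(m-1)/2\rceil + 2n - 1$. Any two such labels are pairwise coprime, so in particular the labels on the two endpoints of $P_n$ are coprime. Therefore adjoining the missing edge to turn $P_n$ into $C_n$ does not violate the coprime labeling condition, and we obtain a coprime labeling of $P_m+C_n$ using only labels up to $L$.

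The only step that required real work has already been done inside Theorem \ref{pathjoin}, so no new obstacle arises here; the one thing to be careful about is simply matching the parity formulas, which is immediate since the labeling from Theorem \ref{pathjoin} uses $L = m+2n-2$ when $m$ is odd and $L = m+2n-1$ when $m$ is even.
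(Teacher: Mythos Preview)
Your argument is correct and is essentially the same as the paper's: the paper simply states that Corollary \ref{path-cycle} is immediate from the proof of Theorem \ref{cycle-cycle}, whose relevant part is precisely the observation you isolate, namely that the $P_n$-labels produced in the proof of Theorem \ref{pathjoin} are $1$ together with primes in $[\lceil L/2\rceil, L]$ and hence pairwise coprime, so one may close $P_n$ into $C_n$ at no cost. Your lower bound via Proposition \ref{subgraph} and Theorem \ref{pathjoin} is likewise exactly what the paper uses (implicitly) for the even-$m$ case of Theorem \ref{cycle-cycle}.
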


\section{Minimum coprime number of a random subgraph} \label{random}
In this section, we prove Theorem \ref{random-er}.

\begin{lemma} \label{random-lem}
Denote $n:= \abs{V(G)}$. If $n$ is sufficiently large and $\a(G) < \sqrt{n}$, then $\mathfrak{pr}(G) \geq p_{n-\a(G)\sqrt{n}}$.
\end{lemma}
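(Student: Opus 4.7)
The plan is to lower-bound $\pi(\mathfrak{pr}(G))$ by classifying labels according to whether they have a small or large prime factor, and then to convert this into a lower bound on $\mathfrak{pr}(G)$ using the prime number theorem together with the crude upper bound $\mathfrak{pr}(G) \leq \mathfrak{pr}(K_n)$ provided by Proposition \ref{subgraph}.

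Fix a minimum coprime labeling and let $S \subseteq \{1,\dots,k\}$ be the set of labels used, where $k := \mathfrak{pr}(G)$. For each prime $p$, two labels in $S$ both divisible by $p$ share the common factor $p$, so the set of such labels corresponds to an independent set in $G$ and therefore has cardinality at most $\alpha(G)$. Every element of $S$ other than $1$ and the primes exceeding $\sqrt{k}$ admits a prime factor $\leq \sqrt{k}$, so applying the union bound over the $\pi(\sqrt{k})$ such primes gives
\[
n \;\leq\; 1 + \bigl(\pi(k)-\pi(\sqrt{k})\bigr) + \alpha(G)\,\pi(\sqrt{k}) \;=\; 1 + \pi(k) + (\alpha(G)-1)\pi(\sqrt{k}).
\]
Rearranging, $\pi(k) \geq n - 1 - (\alpha(G)-1)\pi(\sqrt{k})$, so the target bound $\pi(k) \geq n - \alpha(G)\sqrt{n}$, which is equivalent to $k \geq p_{n-\alpha(G)\sqrt{n}}$, reduces to showing $(\alpha(G)-1)\pi(\sqrt{k}) \leq \alpha(G)\sqrt{n}-1$.

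The case $\alpha(G)=1$ forces $G = K_n$, and the inequality $\mathfrak{pr}(K_n) = p_{n-1} \geq p_{n-\sqrt{n}}$ is trivial. For $\alpha(G) \geq 2$ the needed bound reduces up to absolute constants to $\pi(\sqrt{k}) \leq \sqrt{n}$, and here the essential input is that $\mathfrak{pr}(G) \leq \mathfrak{pr}(K_n) = p_{n-1} = O(n \log n)$ by Proposition \ref{subgraph} and the prime number theorem; hence $\sqrt{k} = O(\sqrt{n\log n})$, and a further application of PNT gives
\[
\pi(\sqrt{k}) \;\leq\; \frac{2\sqrt{k}}{\log k}(1+o(1)) \;=\; O\!\left(\sqrt{n/\log n}\right) \;=\; o(\sqrt{n}),
\]
which beats $\sqrt{n}$ once $n$ is large enough. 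The conceptual heart is the partition of $S$ and the smallest-prime-factor union bound; the principal technical obstacle is ensuring the PNT asymptotic $\pi(\sqrt{k}) = o(\sqrt{n})$ dominates the $\sqrt{n}$ threshold, which requires feeding the upper bound $k \leq p_{n-1}$ into the argument at just the right place and handling the degenerate case $\alpha(G)=1$ on its own.
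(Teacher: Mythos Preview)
Your argument is correct and shares its core with the paper's: for each prime $p$ the set of vertices whose label is a multiple of $p$ is independent, so a union bound over ``small'' primes forces most labels to have no small prime factor, hence to be prime themselves. The one difference is the cutoff for ``small.'' The paper takes the first $\lfloor\sqrt{n}\rfloor$ primes $p_1,\dots,p_{\lfloor\sqrt{n}\rfloor}$, which immediately produces the factor $\sqrt{n}$ in the count and then uses $p_{\sqrt{n}}^2 > p_n$ (for large $n$) together with the case split ``some label exceeds $p_n$'' to conclude the remaining labels are prime. You instead take all primes up to $\sqrt{k}$, which makes the ``remaining labels are prime'' step automatic but shifts the work to verifying $\pi(\sqrt{k})\le\sqrt{n}$ via the a priori bound $k\le\mathfrak{pr}(K_n)=p_{n-1}$ and the prime number theorem. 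Both routes invoke the same ingredients; the paper's threshold is marginally slicker because the $\sqrt{n}$ appears without the detour through $k\le p_{n-1}$.
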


\begin{proof}
Under any coprime labeling of $G$, the vertices with labels which are multiples of some prime $p$ must form an independent set. Thus as we let $p$ range over $p_1, \dots, p_{\sqrt{n}}$ respectively, we obtain $\sqrt{n}$ (possibly non-disjoint) independent sets, each of which contains at most $\a(G)$ vertices. Hence there are at least $n-\a(G)\sqrt{n}$ vertices whose labels are not multiples of any prime up to $p_{\sqrt{n}}$. If any of the remaining labels exceeds $p_n$, then we are done. Otherwise, as the remaining labels only have prime factors larger than $p_{\sqrt{n}}$ and $p_{\sqrt{n}}^2 > p_n$ for large $n$, the remaining labels must all be prime. Using the next $n-\a(G)\sqrt{n}$ primes after $p_{\sqrt{n}}$ as labels yields the desired result.
\end{proof}

\begin{proof}[Proof of Theorem \ref{random-er}]
By a celebrated result of Bollob\'as and Erd\H{o}s \cite{bollobas}, we have $\omega(G(n,p)) \sim 2 \log_{1/p} n$ almost surely, where $\omega(G)$ denotes the \textit{clique number} of $G$, i.e., the size of the largest complete subgraph of $G$. As $\a(G(n,p)) = \omega(G(n,1-p))$, by the prime number theorem, $p_{n-\a(G)\sqrt{n}} \sim n \log n$, and using Lemma \ref{random-lem}, we have $\mathfrak{pr}(G(n,p)) \sim n \log n$ almost surely as $n \to \infty$.

\end{proof}

\section{Further directions} \label{open}
Here we pose a number of open questions in coprime graph labeling.

\begin{question}
We observed in Examples \ref{path6} and \ref{path7} that $\mathfrak{pr}(P_m+P_n)$ exhibits interesting behavior when $n \leq m \leq R_{n-1}-2n$. Is there a nice characterization of $\mathfrak{pr}(P_m+P_n)$ in these cases, and in particular, is it possible to predict when $\mathfrak{pr}(P_m+P_n) = 2 \left\lceil \frac{m-1}{2} \right\rceil +2n-1$?
\end{question}

\begin{question}
Is it sufficient to take $M \geq N$ in Theorem \ref{pathjoin} for sufficiently large $N$? This is conjectured in \cite{asplund}.
\end{question}

\begin{question}
Can we improve on the bounds for $\mathfrak{pr}(K_{m,n})$ in Theorem \ref{complete-bipartite}? In particular, can we obtain sharper bounds on $\mathfrak{pr}(K_{n,n})$? 
\end{question}

\begin{question}
The Cartesian product $P_m \square P_n$ is called a \emph{grid graph}. In particular, if $m=2$, the graph $P_2 \square P_n$ is called a \emph{ladder}. Dean \cite{dean} and Ghorbani and Kamali \cite{ghorbani} showed independently that all ladders are prime, resolving a conjecture of Varkey.
Other grid graphs have been shown to be prime, including $P_m \square P_n$ if $m \leq n$ and $n$ is prime \cite{sundaram}, and a few other cases in \cite{kanetkar}. It it true that $P_m \square P_n$ is prime for all $m$ and $n$? This would settle a conjecture in \cite{sundaram}.
\end{question}

\begin{question}
There has been a substantial amount of research conducted on the clique number and independence number of a random subgraph. Would any of these results enable us to obtain lower bounds on $\mathfrak{pr}(G_p)$ for arbitrary $G$?
\end{question}

\begin{question}
For arbitrary $G$, the trivial upper bound $\mathfrak{pr}(G_p) \leq \mathfrak{pr}(G)$ is asymptotically tight, as we may observe in the case where $G$ is prime. Is it possible to obtain a better upper bound on $\mathfrak{pr}(G_p)$ for specific classes of $G$?
\end{question}

\section{Acknowledgments}
This research was conducted at the University of Minnesota, Duluth REU and was supported by NSF/DMS grant 1659047 and NSA grant H98230-18-1-0010. The author would like to thank Joe Gallian for suggesting this problem, and Aaron Berger and Pat Devlin for numerous helpful conversations.

\bibliographystyle{abbrv}
\bibliography{citations}

\end{document}